\documentclass[12pt]{amsart}

\usepackage{amstext,amssymb,amsmath,stmaryrd,amsthm}
\usepackage{geometry} 
\geometry{a4paper} 
\usepackage{color}
\usepackage{hyperref}
\makeindex

\numberwithin{equation}{section}

\newtheorem{theorem}{Theorem}[section]

\newtheorem{proposition}[theorem]{Proposition}

\newtheorem{example}{Examples}[section]

  \newtheorem{corollary}[theorem]{Corollary}
\newtheorem{remark}[theorem]{Remark}

\usepackage{amssymb}
\usepackage{amsmath}
\usepackage{color}
\usepackage{graphicx}
\bibliographystyle{plain}
\usepackage{comment}
\usepackage{float}
\usepackage{hyperref}


\parskip=1.5mm
\parindent0cm

\def\N{{\mathbb N}}

\def\I{\textbf{I}}

\newcommand{\R}{\mathbb{R}}

\newcommand{\E}{\mathbb{E}}

\newcommand{\Ge}{\mathcal{L}}

\begin{document}

\title[ Replica Mean Field limits for neural networks ]{ Replica Mean Field limits for  neural networks with excitatory and inhibitory activity}

\author{ Ioannis Papageorgiou }

 \thanks{\textit{Address:}  Universidade Federal do ABC (UFABC) - CMCC, Avenida dos Estados, 5001 - Santo Andre - Sao Paulo, Brasil.
\\ \text{\  \   \      } 
\textit{Email:}  i.papageorgiou@ufabc.edu.br, papyannis@yahoo.com  }

\keywords{{brain neuron networks, Replica Mean Field limit,  Pure Jump Markov Processes}
\subjclass[2010]{  60K35,     60G99}  } 



\begin{abstract} We study Replica Mean Field limits for a neural system of infinitely many neurons with both inhibitory and excitatory interactions. As a result we obtain an analytical characterisation of the invariant state. In particular we focus on the Galves-L\"ocherbach  model with interactions beyond the Uniform Summability Principle.
\end{abstract}
 
\date{}
\maketitle 
 
\section{Introduction}

 We study infinitely large neural networks. The activity of each neuron is described by the evolution of its membrane potential (see \cite{Tuc}), which is characterized by  a brief depolarization called a spike or a jump (see \cite{H-K-L}, \cite{Kr}). When a neuron spikes, it emits an action potential (see \cite{L-M}). If a neuron $i$ has membrane potential value $x_i$,  it then spikes at a rate $\phi(x_i)$, where the functions $\phi:\R_+ \rightarrow \R_+$ are called intensity functions. When a neuron spikes its potential is reset to a no-negative resting value $r_i$, while neurons that interact with $i$     receive an additional amount of either negative or positive potential which is added to their membrane potential.

In this paper, we study the Replica Mean Field limits of a system of infinitely many inhibitory and excitatory neurons.

One can study  neural networks by focusing on the behavior of individual neurons, as in  \cite{D-L-O}, \cite{C17},  \cite{H-L}, \cite{G-L}   and \cite{H-R-R}, where the attention lies on the examination of the jump times. In this case Hawkes processes are usually used which are memoryless since the point process reset to a fixed value  after a spike.   

 Alternatively,  as in the current paper,  the interactions among neurons in the network can be studied. In this case the entire evolution of the membrane potential of each neuron is studied.  In fact, the evolution of the network between jumps is continuously analysed.  Inhibition models for a finite number of neurons based on \cite{Kr} are studied in \cite{K-M-R},    \cite{Tu} and \cite{Cot92}, where a drift is not necessary since the neuron`s membrane potential   is reduced whenever a spike occurs in the system. The opposite of course happens in the case of excitatory neurons where a drift is crucial for the control of the system. 
 
 The case of networks with   inhibitory and excitatory connections is studied by  \cite{TUROVA1997197},  \cite{Pa} and   \cite{P-R-R}, while  only inhibitory or excitatory connections are studied by \cite{Me-Mo-Tu94}. In \cite{H-K-L},  a network with a deterministic drift has been considered.

The system of neurons that we consider in the current work contains  infinite many neurons that interact with each other through chemical synapses.  Interactions between neurons occur when a neuron spikes. When a neuron $i$ spikes, any neuron $j$ that is connected to $i$ receives an additional amount  $w_{ij}\in \R$ of membrane potential called a synaptic weight.   As for the membrane potential of the neuron $i$ that spikes, it is reset to some no-negative value $r_i$. This description refers to the  Galves-L\"ocherbach model (G-L) introduced in \cite{G-L} to describe   interactions of   neural networks. In \cite{G-L}, the authors required the synaptic weights to satisfy the Property of Uniform Summability (PUS):

\begin{align}\label{Dob}\sup_{i\in \N} \sum_{j\in \N} \vert w_{ji} \vert<\infty.\end{align}

This is a  Markovian Process that belongs in the family of the Piecewise Deterministic Markov Processes introduced in \cite{Davis84} and \cite{Davis93}, which is frequently used in  modeling   chemical and biological phenomena (see for instance  \cite{ABGKZ}, \cite{PTW-10} and \cite{C-D-M-R}). Piecewise Deterministic Markov Processes (PDMPs) have emerged as a powerful mathematical framework for modeling neuronal dynamics, particularly in  integrate-and-fire models like Stein’s neuronal model (\cite{Stein}). The hybrid nature of PDMPs, combining deterministic evolution with stochastic jumps, makes them particularly well-suited for capturing the behavior of neuronal membrane potentials, where continuous voltage dynamics are interrupted by discrete spike resets. PDMPs extend Stein’s model by incorporating random jump processes that represent synaptic input events, stochastic ion channel behavior, or external perturbations. For example, in \cite{La-Mus91}, PDMPs were used to model neurons with complex dendritic structures. The membrane potential at the trigger zone is modeled by a one-dimensional stochastic process with deterministic evolution between stochastic synaptic input events, which provides a synthesis of Stein's concept of the membrane potential behavior and the diffusion concept. Another application of PDMPs in neuroscience is presented in \cite{Re-Th-Tre17}, where  neuronal activity is  formulated as an     optimal control problem for infinite-dimensional PDMPs, incorporating both deterministic membrane potential evolution and stochastic transitions between different states of the ion channel. 

Furthermore, hydrodynamic and mean-field limits of the model were studied in  \cite{Co}, \cite{D-L}, \cite{C-D-L-O}  and \cite{Co2}, while    properties of the invariant measure were studied in  \cite{L17}. In  \cite{Pa2} and \cite{H-P}, some inequalities were also obtained. Phase transition and metastability were studied in \cite{F-G-L}, \cite{A}, \cite{A-N-R}, \cite{A1}, and \cite{A-P}. In \cite{Kr} and \cite{Lo}, no deterministic interactions have been considered.

 In this paper, we study   the Galves-L\"ocherbach  model in the framework of \cite{Pa} that allows interaction weights that go  beyond the restrictive Property of Uniform Summability  of the G-L model. In particular, we study the Replica Mean Field properties for the model beyond uniform summability. Replica Mean   Field (RMF) limits were used by Baccelli  and Taillefumer in \cite{B-T}  to describe the stationary state of a finite  neural network. Since \cite{B-T} considered  finitely many neurons, the system directly satisfied the uniform summability property. We will use the exact same approach here, but we will take advantage of the existence of inhibitory connections cancelling excitatory ones to obtain results about a system of infinitely many neurons that go beyond the Uniform Summability Principle.

The importance of studying neural networks that include both excitatory and inhibitory interactions follows from the fact that the lack of balance between the two types of interaction is associated with certain brain diseases (see \cite{C-M-T-T15}, \cite{BHJOT}). This takes place mainly when excitation dominates over inhibition (see \cite{A-E-V94}, \cite{Br2000}). For this reason, the presence of inhibition is crucial in maintaining balance  (see \cite{Br-Ha99}, \cite{Ma-La96}, \cite{Je-Tr-Wh95} and \cite{Am-Br97}).   Thus, when inhibitory neurons are included,  the average local excitation can balance out with local inhibition (\cite{Se-Ts95}, \cite{Am-Br97}, \cite{Sh-Ne94}, \cite{Lu-Om-Si06}, \cite{Abe91},\cite{Am-Br97}, \cite{So-Vr96}).  
 What is, however, important  to notice is that this balance is not required only in a local level but globally within the network. This means that synapses between distant neurons should also be allowed (see \cite{So-Vr96}, \cite{Am-Br} and \cite{Am-Br97}).

 \section{Replica Mean Field Limits}\label{RMF}
 Replica Mean   Field (RMF) limits were used by Baccelli  and Taillefumer in \cite{B-T} in order to describe the stationary state of a system of finite number of neurons (see also \cite{BaTa20}, \cite{r8},  \cite{r10},  \cite{r49},   \cite{r60} and \cite{Co-Pa}).  In RMF, if we have a system of $K$ neurons,  we consider a number of replicas of this system. If the RMF consists of  $M$ replicas,  every one of them represents a copy of the initial neural system. As a result, there exists a copy of any of the $i\in K$ neurons in every replica. When a neuron spikes, it will interact with neurons from other replicas. Assume that a neuron $i$ interacts with another neuron $j$ in the initial system. In the RMF, when the neuron $i\in K$ from replica $n$ spikes,  then it will   choose uniformly one of the $M-1$ copies of $j$ from any of the other $M-1$ replicas different from $n$ to interact.  The RMF framework provides a biologically plausible way to analyze large, intensity-based neural networks while preserving key finite-size effects. Unlike classical mean-field approximations, which assume infinitely large homogeneous networks, the replica approach introduces independent copies (or replicas) of a finite network with randomized connectivity. This method is particularly relevant for biological neural networks, where sparse connectivity, stochastic activity, and finite-size effects play a crucial role. By incorporating randomness into neural interactions, the RMF model better reflects real-world neural architectures, such as cortical microcircuits and hippocampal networks. In addition, it provides a robust framework for studying synaptic plasticity, learning, and information processing under realistic neural conditions.

 The linear G-L model considered in \cite{B-T}, given that it consists of finitely many neurons, directly satisfies the PUS condition (\ref{Dob}). We will use the exact same approach here, but with some    modifications that will allow us to obtain results about a system of infinitely many neurons.

 The idea behind the RMF structure is to express the moments of the invariant measure in terms of structural characteristics of the network, such  as the drift $a_i$, the reset value $r_i$ and the synaptic weights $w_{ij}$. In particular our aim is to obtain expressions of the moments of spiking rates $\beta_i^k=\E[N_i((0,1])^k]$ in terms of the aforementioned features. To do so, we will derive an ODE for $\beta_i^k$ through the RMF limit.  
 
 The results presented in the current  article as well as the methods to obtain them, follow very closely the work in \cite{B-T}. 
 For a  complete and comprehensive analysis of the RMF approach, one should look at \cite{B-T}. Here, we will   mention only the absolute necessary  aspects of the method  that  will be used. In general, Replica Mean Field models consist of a finite number of copies of the initial model which replaces the initial interactions with higher level inter-replica interactions. Then, assuming the Poisson Hypothesis, when taking the limit of the number of replicas going to infinity we consider independence between the replicas. In the actual schema considered in \cite{B-T}, every replica consists of a finite number of neurons. In our case however, where a network of infinitely many neurons is studied, we will consider replicas of infinitely many neurons. 
 
 The finite-replica model of infinitely many neurons  is defined analogously to the finite-neurons case, as follows:
 
 For a finite model consisting of $M$ replicas, let  $m\in \{1,...,M\}$ and $i\in\N$, $N_{m,i}$ denote the spiking activity of the neuron $(m,i)$, that is the neuron $i$ belonging to the replica $m$. Accordingly, $\phi(x_i^m)$   for $m\in \{1,...,M\}$ and $i\in\N$, represents the spiking activity of the  neuron $(m,i)$, which contrary to the typical model where one neuron $i$ interacts with any neuron $j$ of its network through a synaptic weight $w_{ij}$, in the $M$ replica model, every neuron $(m,i)$ of a replica $m$ interacts only with the neurons $(n,j)$ belonging only to other replicas, i.e. $n\neq m$, through the same intensity weight $w_{ij}$, in such a way that the replica $n$ is chosen in a uniform way among the $M-1$ replicas  $\{1,...,m-1,m+1,...M\}$ different from $m$.  
 
 Although we consider a network of infinitely many neurons, we will assume that for any  neuron $i\in \N$, there is only a finite number $K_i$ of neurons that receive a synaptic weight different from zero from $i$
 \[K_i:=\{j\in \N: w_{ij}\neq 0\}<\infty.\]
 Equally we assume that for a neuron $i\in \N$, there is only a finite number $K^i$     of neurons from which it receives a synaptic weight different from zero, that is:
  \[K^i:=\{j\in \N: w_{ji}\neq 0\}<\infty.\]
  Furthermore, we will  assume that
for every  $i\in \N,$ \[K^{i}\subset \{1,...,i-1,i+1\}.\] 
In essence this is a feedforward neural network, where the succeeding neuron $i+1$ has also been added, in order to avoid  trivial cases under hypothesis (H0) presented below.  Feedforward neural networks, where each neuron receives input from all preceding neurons, are fundamental to various biological systems and play crucial roles in sensory processing, motor control, and neural development. These structures facilitate sequential information transmission, enabling precise timing and efficient computation. In particular, synfire chains, which are characterized by precisely timed neural sequences, have been observed in motor and sensory circuits, contributing to processes such as speech and movement coordination (\cite{Abe91}).

One striking example of a generation of a sequential neural network   occurs in songbirds, where synfire chains in the High Vocal Center  drive sequential firing during song production (\cite{Fe-Ha-Ko02}). Similarly, repetitions of spontaneous synaptic input patern in neocortical neurons in vivo and in vitro were studied  in \cite{A-A-C-L-F-I-Y04}.

Since the set of neurons $K^i$ that influences a neuron $i$ is always finite, our system is infinite-dimensional in the weak sense that $K^i$ can increase to infinity as $i$ goes to infinity. As we will see later in the section, when we study ergodicity this is important in order to talk about a single neuron's stationarity. Furthermore, it should be noted that we  do not assume that the size of the sets $K_i$ or $K^i$ are uniformly bounded on $i$.

\section{Linear intensity  and  linear interaction.}\label{RMF1}

In this section, we study the RMF limit for the linear G-L model, where the intensity function  is linear  $\phi(x_i)=x_i$. In order to stay closer to the   stochastic framework
in  \cite{B-T},  we introduce a variation in the model of the previous section, that is, that   when a neuron $i$
spikes, the membrane potential   does not settle necessarily to $0$ as in \cite{G-L}, but   to some value $r_i\geq 0$. We will consequently study both the cases where $r_i=0 \ \forall i \in \N$ and $r_i>0 \  \forall i\in \N$.

In most classical neuronal models, it is assumed that, following an action potential, the membrane potential is reset to either a hyperpolarised value or to its resting potential -- represented as zero in our model -- that is, $r_i = 0$. This ensures that the neuron remains inactive for a period, commonly referred to as the refractory period. However, evidence suggests that the membrane potential does not always return exactly to its resting value after a spike. In \cite{CostaRibSan1991}, the authors investigated the afterpotential characteristics and firing patterns of developing rat hippocampal CA1 neurons. These neurons undergo substantial electrophysiological changes during postnatal development, particularly in their resting membrane potential (RMP). Their results showed that younger neurons exhibit a less negative RMP compared to mature neurons. A similar observation was reported in \cite{McCPr87}, which examined postnatal cerebral cortical pyramidal neurons in rats; however, in that study, the authors attributed the observed depolarisation to a measurement artefact. More recently, in \cite{Borris13}, the authors analysed a large database of leech heart interneuron models and found that many neurons exhibit multistability. This means that a single neuron model can display multiple stable behaviours, such as quiescence and bursting, including different quiescent resting states. These findings highlight how neurons may leverage multistability for functional flexibility, but also how such dynamics could contribute to dysfunction.

The generator of the process $X$ is given  for any test function $ f : \R_+^\N \to \R $  and $x \in \R_+^{\N}$ by    
\begin{align}\label{genRMF}
\Ge f (x ) =&-\sum_{i\in \N}a_ig(x_i)\frac{d}{dx_i}f +\sum_{i\in \N }   \phi (x_i) \left[ f ( \Delta_i ( x)  ) - f(x) \right]  
\end{align}
for $g : \R_+ \to \R_+$ and $a_i\geq 0$ for all $i\in \N$
where
\begin{equation}\label{deltaRMF}
(\Delta_i (x))_j =    \left\{
\begin{array}{ll}
\max\{ x_{j} +w_{i  j},0\}  & j \neq i \\
r_{i}  & j = i 
\end{array}
\right\}.\end{equation}
   In order to form the infinitesimal generator of the $M$ replica model for the (\ref{genRMF})-(\ref{deltaRMF}) model we need to introduce the set $V_{m,i}$ of neurons that are targeted by the neuro $(m,i)$. When a neuro $j$  in replica $n$, denoted $(n,j)$, spikes, it interacts by sending a weight $w_{jz}$ to every neuro $z\in K_j$. Since in the replica model there are $M-1$ different neurons $z$ it chooses one of the $M-1$ replicas uniformly, say $s$, and interacts with this $(s,z)$ neuro. Since every neuro $(m,i)$ can be targeted by $K_i$ different neurons, $V_{m,i}$ has  cardinality $\vert V_{m,i}\vert=(M-1)^{K_{i}},$ where
\[V_{m,i}=\left\{   v\in \{1,...,M\}^{K_{i}} : v_i=m   \text{\  and\  } v_j \neq m, \ j\neq i \right\}.\]

The infinitesimal generator for the M-replica Markovian dynamics is
 \begin{align*}
 \Ge^{M} [f_u](x)=&-\sum_{i\in\N}\sum_{m=1}^M a_ig(x^{m}_i)\frac{d}{dx_i^m}f_u(x) +\\ &+\sum_{i\in\N}\sum_{m=1}^M\frac{1}{\vert V_{m,i} \vert}\sum_{v\in V_{m,i}}\phi( x_i^{m})\left[ f (x+w_{m,i,v}(x)  ) - f(x) \right] 
 \end{align*}
where the weights distributed after neuron $(m,i)$ spikes are
\[ [w_{m,i,v}(x)]_{j,v_j}=  \left\{
\begin{array}{lcl}
w_{ij} &   \text{ if }    j \neq i, \ v_j  \neq m  \\  
r_i-x_i^m   & \text{ if }    j = i, \  v_j  =m   \\
0 & \text{   otherwise }  
\end{array}
\right\}.  \]
We consider two main sets of different hypothesis  about the  drift and the intensity functions $\phi,g$:
 \begin{itemize}

\item   \underline{Hypothesis (H1):}  The linear (G-L) model as in \cite{B-T}.
\[ \phi(x_i)=x_i,  \ g(x_i)=x_i   \  \text{and} \ \min_{i\in \N} r_i>0. \]
 
 \item    \underline{Hypothesis (H2):}  For   $z> 0$, assume,   
\[ \phi(x_i)=x_i+z,  \ g(x_i)=x_i .   \]
  \end{itemize}

The computer  simulations presented by C. Pouzat (see \cite{PouzM18})  for the G-L model designed to emulate the stochastic dynamics of neural networks, present a firing behavior of neurons that is driven by a rate function, which is conceptually similar to the functions $\phi(x_i)$ in the hypotheses (H1) and (H2).  In Hypothesis (H1),  the firing rate is directly proportional to the membrane potential, suggesting a linear relationship between the neuron's activity and its current state. This linear assumption is often used in theoretical models of neural networks, including the G-L model, to simplify the analysis of network dynamics.  On the other hand, Hypothesis (H2) introduces a shift in the firing rate.

For any set $S\subset \{1,...,M\}$ and $K\subset \N$, define 
   $V^{K,S }_{u}(x)=e^{u\sum_{i\in K}\sum_{m \in S} x_i^m}$. Then 
\begin{align*}
\Ge^{ M} [V^{K,S}_{u}](x)=&-\sum_{i\in\N}\sum_{m\in S} a_ig(x_i^m)\frac{d}{dx_i^m}V^{K,S}_{u}(x) +\\ &+\sum_{i\in\N}\sum_{m=1}^M\frac{1}{\vert V_{m,i} \vert}\sum_{v\in V_{m,i}} \phi(x_i^m)\left[ V^{K,S}_{u} (x+w_{m,i,v}(x)  ) - V^{K,S}_{u }(x) \right]. \end{align*}Since,  a neuron $(j,m)$ with $j\notin \cup_{i\in K}K^{i}$, does not  contribute synaptic weights to the sum $\sum_{i\in K}\sum_{m \in S} x_i^m$, we can write 
\begin{align*}
\Ge^{ M}& [V^{K,S}_{u}](x)=-\sum_{i\in K}\sum_{m\in S} a_ig(x_i^m)uV^{K,S}_{u}(x) +\\  &+\sum_{i\in K}\sum_{m\in S} \frac{1}{\vert V_{m,i} \vert}  \sum_{v\in V_{m,i}}\phi(x_i^m)\left[ e^{u(r_{i}-x_i^m)+\sum_{ j\in K\cap K_{i},u_j\neq m}uw_{ij}} - 1 \right]V^{K,S}_{u}(x) 
+\\  &+\sum_{i\in K^{c}\cap \{ \cup_{j\in K}K^{j}\}}\sum_{m=1} ^{M}\frac{1}{\vert V_{m,i} \vert}  \sum_{v\in V_{m,i}}\phi (x_i^m)\left[ e^{\sum_{j\in K\cap K_{i},u_j\neq m}uw_{ij}} - 1\right]V^{K,S}_{u}(x) 
+\\  &+\sum_{i\in K}\sum_{m\in S^{c}}\frac{1}{\vert V_{m,i} \vert}\sum_{v\in V_{m,i}} \phi (x_i^m)\left[ e^{\sum_{j\in K\cap K_{i},u_j\neq m}uw_{ij}} - 1\right]V^{K,S}_{u}(x). 
\end{align*}
Next we consider $\phi$ of polinomial order, that is $\phi(x_i)=(x_i)^ r+z$ for some positive $r\in \N$ and $z>0$.

 \begin{proposition}Assume that $\phi$ is an increasing function of polynomial order. For  every  finite set of neurons $K=\{1,2,...,s\}$ for some $s\in \N$, such that for $ K^{i}\subset \{1,...,i-1,i+1\}$  for any  $i\in K$ and $u>0,$ there exist $c>0$ and $d>0$  such that 
 \[ \Ge^{M} [V^{K,M}_{u}](x)\leq -c   V^{K,M}_{u}(x)+d\I_{B}(x).   \]
  \end{proposition}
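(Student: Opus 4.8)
The goal is a Foster--Lyapunov drift condition for the generator $\Ge^{M}$ applied to the exponential test function $V^{K,M}_{u}(x)=e^{u\sum_{i\in K}\sum_{m=1}^{M}x_i^m}$, with $S=\{1,\dots,M\}$ so that the last sum (over $m\in S^c$) in the displayed identity for $\Ge^{M}[V^{K,S}_u]$ vanishes. The plan is to start from that identity, specialised to $S=\{1,\dots,M\}$, and to estimate each of the two surviving groups of terms separately: the ``main'' term indexed by $i\in K$, which carries the favourable factor $e^{u(r_i-x_i^m)}$, and the ``boundary'' term indexed by $i\in K^c\cap(\cup_{j\in K}K^j)$, which carries only the bounded factor $e^{\sum_{j\in K\cap K_i,\,u_j\neq m}uw_{ij}}-1$. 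Since $\phi$ is of polynomial order, $\phi(x_i^m)=(x_i^m)^r+z$, and one writes $\phi(x_i^m)\le C_1 x_i^m + C_2$ only after controlling the $r$-th power; more precisely, one keeps $\phi(x_i^m)$ explicit and exploits that for each fixed $i\in K$ and each $m$, the expression $\phi(x_i^m)\,[e^{u(r_i-x_i^m)+A_i}-1]$, where $A_i:=\sum_{j\in K\cap K_i,\,u_j\neq m}uw_{ij}\ge -u\sum_j|w_{ij}|$ is bounded below by a constant depending only on $K$ and $u$, tends to $-\infty$ as $x_i^m\to\infty$ (because the exponential decay beats the polynomial growth), while it is bounded above for $x_i^m$ in any compact set. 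Averaging over $v\in V_{m,i}$ and over the finitely many $i\in K$ and $m\in\{1,\dots,M\}$, one gets that
\[
\sum_{i\in K}\sum_{m=1}^{M}\frac{1}{|V_{m,i}|}\sum_{v\in V_{m,i}}\phi(x_i^m)\bigl[e^{u(r_i-x_i^m)+A_i(v)}-1\bigr]
\le -2c
\]
whenever some coordinate $x_i^m$ with $i\in K$ is large, for a suitable $c>0$; here the feedforward assumption $K^i\subset\{1,\dots,i-1,i+1\}$ together with $K=\{1,\dots,s\}$ guarantees that the index sets $K\cap K_i$ appearing in $A_i$ involve only the finitely many coordinates in $K$, so all these exponents are genuinely bounded.

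Next I would handle the boundary term. For $i\in K^c\cap(\cup_{j\in K}K^j)$ the bracket $e^{\sum_{j\in K\cap K_i,\,u_j\neq m}uw_{ij}}-1$ is a bounded constant (again by finiteness of $K\cap K_i$ and the feedforwardness), so this term is bounded by $C_3\sum_{i}\phi(x_i^m)$ over a finite index set of $i$'s; the $x_i^m$ here are coordinates \emph{not} in $K$. The key point is that $V^{K,M}_u$ does not depend on those coordinates, so this term is of the form $(\text{polynomial in the }x_i^m,\ i\notin K)\times V^{K,M}_u(x)$ — it is not absorbed by the decay. This is exactly why the bound must be stated with an indicator $\I_B(x)$ of a set $B$, rather than a clean $-cV+d$: one takes $B$ to be a set on which \emph{all} the relevant coordinates (those in $K$ \emph{and} the finitely many boundary ones feeding $K$) are bounded, and off $B$ one argues the main negative term dominates everything. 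Concretely, choose $B=\{x: x_i^m\le R\ \text{for all}\ i\in K\cup(\cup_{j\in K}K^j),\ m\le M\}$ for $R$ large; on $B^c$ at least one such coordinate exceeds $R$, and one splits into the case where the large coordinate lies in $K$ (main term wins, giving $-cV$) versus the case where it lies only in the boundary set (then one still needs the main term's decay in the $K$-coordinates to beat the polynomial growth in the boundary coordinates — this forces choosing the Lyapunov exponent and $R$ so that the cross-terms are controlled, using that the drift term $-u\sum_{i\in K}\sum_m a_i g(x_i^m)V^{K,M}_u$ is also available and is $\le 0$).

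The main obstacle, and the step I would spend the most care on, is precisely this interaction between the two groups of terms: the decay is only in the $K$-coordinates, whereas the boundary term grows polynomially in coordinates outside $K$, and these are genuinely different variables. The resolution is structural rather than computational — it is the reason the proposition is stated with $\I_B$ and for a \emph{finite} $K$ of the special form $\{1,\dots,s\}$: because $K^i\subset\{1,\dots,i-1,i+1\}$, the set of boundary neurons $\cup_{j\in K}K^j\subset\{1,\dots,s+1\}$ is finite, so the ``bad'' polynomial factor lives on finitely many coordinates and the compact set $B$ can be taken to contain all of them. Once $B$ is chosen this way, on $B^c$ the dominant behaviour is governed by a coordinate in $K\cup(\cup_{j\in K}K^j)$ being large; if it is in $K$ the $e^{-ux_i^m}$ factor produces $-cV$ with room to spare (absorbing the at-most-polynomial-in-$s$ boundary contribution), and if it is only in the boundary set one notes that $V^{K,M}_u(x)$ is then itself bounded by $e^{usMR}$ away from the $K$-coordinates' growth, so the product $\phi(x^m)V^{K,M}_u$ cannot exceed a constant times something already controlled — after enlarging $B$ once more if needed. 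Assembling the three estimates (drift $\le 0$, main term $\le -2c$ on $B^c$, boundary term $\le c$ on $B^c$ after the above, and both $\le d$ on all of $\R_+^{\N\times M}$ restricted to $B$) yields $\Ge^M[V^{K,M}_u](x)\le -cV^{K,M}_u(x)+d\,\I_B(x)$, which is the claim.
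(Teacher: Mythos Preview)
Your handling of the ``boundary'' sum over $i\in K^c\cap(\cup_{j\in K}K^j)$ contains a genuine gap. Under the feedforward hypothesis this sum reduces to the single index $i=s+1$, and you correctly note that it involves $\phi(x_{s+1}^m)$, a coordinate on which $V^{K,M}_u$ does not depend. But your proposed remedy --- enlarging $B$ to bound $x_{s+1}^m$ as well and arguing that the main term still dominates on $B^c$ --- does not close. On the slice of $B^c$ where every $K$-coordinate is $\le R$ but $x_{s+1}^m$ is large, $V^{K,M}_u(x)\le e^{usMR}$ is bounded, so $-cV^{K,M}_u(x)$ is bounded below; meanwhile, if $w_{s+1,s}>0$, the boundary contribution $\sum_m\phi(x_{s+1}^m)\bigl(e^{uw_{s+1,s}}-1\bigr)V^{K,M}_u(x)$ is positive and grows without bound in $x_{s+1}^m$, and no other term in $\Ge^M[V^{K,M}_u]$ involves that variable. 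The inequality $\Ge^M[V^{K,M}_u]\le -cV^{K,M}_u$ therefore cannot hold on that slice, and your claim that ``$\phi(x^m)V^{K,M}_u$ cannot exceed a constant times something already controlled'' is false there.

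The paper sidesteps this entirely. After observing $\cup_{j\in K}K^j\subset\{1,\dots,s+1\}$, it writes $\Ge^M[V^{K,M}_u]$ as just the drift plus the main $i\in K$ sum --- the boundary sum is absent. The remaining argument is then short: polynomial $\phi$ makes $x\mapsto\phi(x)e^{-ux}$ bounded above by some $\phi(\tilde x)e^{-u\tilde x}$, so the bracket is at most a constant minus $\phi(x_i^m)$, and the compact set $R_c^{KM}\subset\R_+^{KM}$ is defined by $\sum_{i\in K}\sum_m\phi(x_i^m)$ lying below a threshold. The reason the boundary sum can be omitted is that, as the subsequent Remark and Theorem~3.2 make explicit, the object under study is the \emph{closed} finite subsystem on $K$ (state space $\R_+^{KM}$), not the restriction of the full infinite-dimensional generator to a $K$-measurable test function. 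In that reading neuron $s+1$ is not part of the dynamics and there is no boundary term to estimate; your attempt is effectively computing with the wrong generator.
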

 \begin{proof}Since $K^{i}\subset \{1,...,i-1,i+1\}$ we have that $\{\cup_{j\in K}K^{j}\}\subset \{1,2,...s+1\}$, and so
 \begin{align*} 
\Ge^{M} [V^{K,M}_{u}](x)=&-\sum_{i\in K}\sum_{m=1}^Ma_i g(x_i^m)uV^{K,M}_{u}(x) +\\  &+\sum_{i\in K}\sum_{m=1}^M\frac{1}{\vert V_{m,i} \vert}\sum_{v\in V_{m,i}} \phi(x_i^m)\left[ e^{ u(r_i-x_i^m)+\sum_{j\in K\cap K_{i},u_j\neq m}uw_{ij}} - 1 \right]V^{K,M}_{u}(x) . 
\end{align*}
Since $\phi$ is of polynomial order, for  every $u>0$ and $x\geq 0$, there exists an $   \tilde x\in [0,\infty)$ such that $\phi(\tilde x)e^{-u\tilde x}\geq \phi(x_i)e^{-ux_i}$ for all $x_i\geq b$ for some $b>0$. We can compute 
  \begin{align*}
\Ge^{M} &[V^{K,M}_{u}](x)\leq  \\   & \sum_{i\in K}\sum_{m=1}^M\frac{1}{\vert V_{m,i} \vert}\sum_{v\in V_{m,i}}\left[  \phi(\tilde x)e^{+ur_i-u \tilde x+u\sum_{j\in K\cap K_{i},u_j\neq m}w_{ij}} -  \phi(x_i^m) \right]V^{K,M}_{u}(x)
 . 
\end{align*}
  If for some $c>0$, we consider the compact set 
 \begin{align*}&R^{KM}_c:= \\  &\left\{ x\in \R_+^{KM} ,x_i>b: \sum_{i\in K}\sum_{m=1}^M \phi( x_i^m)\leq \sum_{i\in K}\sum_{m=1}^M\frac{1}{\vert V_{m,i} \vert}\sum_{v\in V_{m,i}}\phi(\tilde x)e^{+ur_i-u \tilde x+u\sum_{j\in K\cap K_{i},u_j\neq m}w_{ij}}  +c \right\},\end{align*}
then,  outside  $R^{KM}_c$ we have 
$\Ge^{K,M} V^{K,M}_u(x)\leq -c  V^{K,M}_u(x) $, while, for $x\in R_c^{KM}$ we get that
 $
\Ge^{K,M} V^{K,M}_{u,1}(x)\leq  -cV^{K,M}_{u,1}+d
,$
for two positive constants $c$ and $d$.
From the two bounds 
 \begin{align*}
\Ge^{M} V^{K,M}_{u} \leq  -cV^{K,M}_{u}+d\I_{R_c^{KM}}
.
\end{align*}

\end{proof}
\begin{remark}In \cite{B-T}, the Foster-Lyapunov  inequality was used to prove ergodicity for a Replica model which consists of a system of finite neurons, so that the MGF of all the neurons in the system with respect to the stationary  measure can be calculated. 

 However, in our case,  we will limit ourselves to one neuron, with the goal of studying an ODE for  $\Lambda^{m,i}(u)=\E^{\{1,..,i+1\}}[e^{u(x_{ i})}],$ where $\E^{\{1,..,i+1\}}$   the stationary measure referring to the partial network containing only the first    $i+1$ particles  $\{1,..,i+1\}$. Since, by the  construction of the   model,  every  $i\in \N,$ can receive synaptic weights only from  finite neurons  \[K^{i}\subset \{1,...,i-1,i+1\},\] the behavior of any neuron $i$ depends exclusively on a finite system of neurons $j\leq i+1$. In that way,   $\Lambda^{m,i}(u)=\E^{\{1,..,i+1\}}[e^{u(x_{i})}]  $,  can be studied as   the case of the neuron $i$ belonging to a finite system of neurons $\{1,...,i+1\}$. It is important at this point to clarify that  ergodicity in our context does not refer to ergodicity for  the whole infinite-dimensional system but to the individual neuron. For this reason, the stationarity of only  individual neurons $i$ is studied through the RMF limit $\Lambda^i=\lim_{m\rightarrow \infty} \Lambda^{m,i}$. The regeneration argument used in the following to establish ergodicity is after all based on the fact that its neuron is influenced by a finite set of neurons, which themselves are influenced by a smaller set of neurons. In that way, the regeneration time for every neuron is finite. 
 Then, we can obtain  uniform results on $i$ and so conclude for all $i\in \N$.  In this case, $\E^{\{1,..,i+1\}}$ is a measure that is obtained in a probability space of $i+1$ random variables $\{1,...,i+1\}$, with boundary conditions the values of the rest of the r.v. $\{j:j\geq i+2\}$, which however, by the construction of the network  do not affect the first $i+1$ particles. 
  \end{remark}

We can now prove the Harris ergodicity.
\begin{theorem}
    For every $i\in \N,$ and  $m\in \{1,...,M\}$  the system of  $i+1$ neurons $\{x_{m,1}(t),x_{m,2}(t),...,x_{m,i+1}(t)\}_{t\in \R}$  is ergotic.
\end{theorem}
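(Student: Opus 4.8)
The plan is to deduce the statement from the standard Harris / Foster--Lyapunov criterion for pure jump Markov processes with drift (Meyn--Tweedie, together with its piecewise--deterministic versions), applied to the \emph{closed} finite block of neurons that contains neuron $i$. By construction of the network, the family $\{x_{n,j}(t): 1\leq n\leq M,\ 1\leq j\leq i+1\}$ evolves autonomously: a neuron $(n,j)$ with $j\leq i+1$ receives synaptic input only from neurons with index in $K^{j}\subset\{1,\dots,j-1,j+1\}\subset\{1,\dots,i+1\}$, the input that $(n,i+1)$ would receive from neuron $i+2$ being discarded in the partial network (this is the meaning of the boundary condition in the Remark above). Hence the restriction of $\Ge^{M}$ to this block generates a well-defined PDMP on a subset of $\R_+^{(i+1)M}$ with right--continuous paths, and by the symmetry among replicas the marginal of its invariant law on the coordinates of any single replica $m$ is the object in the statement; since a factor of an ergodic process is ergodic, it suffices to prove that the block process is positive Harris recurrent with a unique invariant probability measure.

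\textbf{Step 1: geometric drift.} This is the Proposition applied with $K=\{1,\dots,i+1\}$ and $S=\{1,\dots,M\}$, which gives $\Ge^{M}[V^{K,M}_{u}](x)\leq -c\,V^{K,M}_{u}(x)+d\,\I_{B}(x)$ for a compact $B$, with the norm--like function $V^{K,M}_{u}(x)=e^{u\sum_{j\in K}\sum_{n}x_{n,j}}$. This in particular rules out accumulation of spike times in bounded intervals, so the block process is non--explosive, and it is the geometric Foster--Lyapunov condition required for Harris ergodicity. Note that under both (H1) and (H2) the spiking rates are bounded on compact sets and are bounded below by a strictly positive constant immediately after a reset (since $\min_i r_i>0$ under (H1), whereas $\phi\geq z>0$ everywhere under (H2)), a fact needed in the next step.

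\textbf{Step 2: minorization on compacts (the regeneration).} This is the substantive point, and is the quantitative form of the regeneration mechanism described in the Remark. Fix a compact $C\subset\R_+^{(i+1)M}$ and a horizon $T>0$. Starting from any $x\in C$, I would isolate, with probability bounded below uniformly over $x\in C$, the event that on $[0,T]$ a prescribed finite sequence of spikes occurs --- neurons $1,2,\dots,i+1$ firing successively in every replica, so that each coordinate $x_{n,j}$ is reset to $r_j$ at least once --- followed by a subinterval of positive length carrying no spike. Between and after the jumps the motion is the deterministic flow $t\mapsto e^{-a_jt}x_j$, a diffeomorphism, while the successive jump times have absolutely continuous conditional densities which, by the lower bound on the rates from Step 1, are strictly positive and locally bounded below. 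Transporting these jump--time densities through the flow shows that the law of $x(T)$ carries a component absolutely continuous with respect to Lebesgue measure, with density bounded below on a fixed open set $O$ in the interior of the positive orthant, uniformly over $x\in C$; that is, $P^{T}(x,\cdot)\geq\varepsilon\,\mathrm{Leb}|_{O}$ for all $x\in C$. Hence every compact set is a small set. A parallel reachability argument (any open set can be reached from any point by a spike--then--drift scenario) gives $\psi$--irreducibility, and aperiodicity follows since the minorization is available for a whole range of horizons $T$.

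\textbf{Conclusion and main obstacle.} Combining the geometric drift of Step 1 with the small--set condition and aperiodicity of Step 2, the Harris theorem yields that the block $\{x_{n,j}(t): n\leq M,\ j\leq i+1\}$ is positive Harris recurrent, admits a unique invariant probability measure, and is geometrically ergodic; restricting to replica $m$ gives the ergodicity of $\{x_{m,1}(t),\dots,x_{m,i+1}(t)\}_{t\in\R}$, so that $\E^{\{1,\dots,i+1\}}$ and $\Lambda^{m,i}(u)=\E^{\{1,\dots,i+1\}}[e^{ux_i}]$ are well defined. I expect Step 2 to be the main obstacle: the reset map is deterministic (a coordinate is set to $r_j$, not randomized), so the only smoothing at each spike comes from the randomness of the jump time, and one must verify that these one--dimensional pieces of randomness, pushed forward by the flow through enough successive jumps, genuinely spread out over an open subset of $\R_+^{(i+1)M}$ with a density bounded below uniformly on $C$; one must also keep the chosen scenario and the set $O$ away from the non--smooth locus $\{x_j=0\}$ produced by the $\max\{x_j+w_{ij},0\}$ nonlinearity. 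Once this local minorization is in hand, the remainder is routine Foster--Lyapunov bookkeeping.
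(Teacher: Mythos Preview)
Your proposal is correct and follows the same outline as the paper's proof: the Foster--Lyapunov inequality from the preceding Proposition gives non-explosion and positive recurrence of a compact set, a strictly positive lower bound on the spiking intensity (from $\min_i r_i>0$ under (H1), from $z>0$ under (H2)) furnishes the regeneration/minorization, and these together feed into the standard Harris--ergodicity criterion. The paper's own argument is much terser: it simply invokes Meyn--Tweedie for non-explosion and positive recurrence of $R_c^{KM}$, then states that $R_c^{KM}$ is a regeneration set (citing Robert--Touboul \cite{r48} for the case (H2)) without writing out the explicit spike-sequence scenario or the absolute-continuity argument you sketch in Step~2. Your version is therefore more self-contained, at the cost of having to deal with the technical obstacle you correctly identify---that the only randomness available for the minorization comes from the jump times, since the resets themselves are deterministic---whereas the paper sidesteps this by appealing directly to the regeneration-set framework.
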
 
    \begin{proof}
 At first we notice that according to \cite{M-T}, the   Foster-Lyapunov drift condition of the  last proposition, implies that the process is nonexplosive and that the set $R_c^{KM}$ is positive recurrent.

Following \cite{B-T},  Hypothesis (H1), implies    that for large $c$, the set $R_c^{MK}$, is also a regeneration set, since $\min_{i\in K} \inf_t \phi (x_{i}(t))>\min_{i \in K}r_i>0$.  We can conclude the same  for Hypothesis (H2), since according to Robert and Touboul \cite{r48}, regeneration is guaranteed when neurons spike consecutively and spontaneously, which is the case when     the spike intensity is always strictly positive, as in the current case where $\min_{i\in K}\inf_t\phi(x_{i}(t))>\min_{ i\in K}z>0$.

 The non-explosiveness of the Markov dynamics, together with the fact that the set $R_c^{MK}$ is positive recurrent and a regeneration set, implies the Harris ergodicity of the Markov chain $\{x_{m,1}(t),x_{m,2}(t),...,x_{m,i+1}(t)\}_{t\in \R}$. 
 
 \end{proof}
Having established the  ergodicity of the process, we can now obtain the ODE that describes  the stationary measure. For some $z\geq 0$, define 
\[\Lambda^{m,i}(u):=\E^{\{1,..,i+1\}}[e^{u(x_i^m+z)}],\] where $\E^{\{1,..,i+1\}}$ the expectation with respect to the stationary  measure. We will derive an ODE for the RMF limit  of $\Lambda^{m,i}$, that is
\[ \Lambda^{ i}:=\lim_{m\rightarrow \infty} \Lambda^{m,i}.\]
 It should be noted that in \cite{B-T} they dealt with the more general moment generating function  $\E[e^{u\sum_{i\in K}x_i^m}]$, from which after using the Poisson Hypothesis they obtained an ODE for $\Lambda^{i}$ and consequently an analytic expression for $\beta_i^{1}=\E[N_i(0,1]]$.  In our case, starting directly from $\Lambda^{m,i}$ will allow us to reproduce the result from  \cite{B-T} following the same  exact  proof, but slightly more simplified, as  due to the occurrence of only one random variable involved  we will avoid the complicated calculation of the quantities vanishing at the limit. Furthermore, we have perturbed the density function by a positive  constant  $z>0$, so that  we can consider the Hypothesis (H2) where we obtain the regeneration even when  $r_i=0$.

  \begin{proposition} \label{rmfprop}Assume $g(x_i)=x_i$ and $\phi(x_i)=x_i+z$, for some $z\geq 0$. For every $m\in \{1,...,M\}$ and $i\in   \N$, $\Lambda^{i}$ satisfies the following ode
\begin{align*}
0=&- (1+a_iu) \frac{d}{du}\Lambda^{i}(u)(x) +\sum_{j\neq i}  \left( e^{uw_{ji}} - 1 \right)  \beta_j \Lambda^{i}(u)+\beta_i e^{ur_i}. 
\end{align*}
where $\beta_i=\E^{\{1,..,i+1\}}[x_{i}]+z$.
\end{proposition}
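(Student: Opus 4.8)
The plan is to follow the computation of \cite{B-T} in the single-neuron form permitted by the Remark. First I would apply the $M$-replica generator $\Ge^M$ to the exponential test function $f_u(x)=e^{u(x_i^m+z)}$, use that the stationary measure $\E^{\{1,\dots,i+1\}}$ — which exists by the preceding Theorem — is invariant, so that $\E^{\{1,\dots,i+1\}}[\Ge^M f_u]=0$, and then let $M\to\infty$ under the Poisson Hypothesis.

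\emph{Generator action.} Specialising the expression for $\Ge^M[V^{K,S}_u]$ obtained above to $K=\{i\}$, $S=\{m\}$ (or computing directly from the generator), and using $g(x_i)=x_i$, $\phi(x_i)=x_i+z$, only three groups of terms survive in $\Ge^M f_u(x)$, since $f_u$ depends on the single coordinate $x_i^m$: (a) the drift term $-a_i u\,x_i^m\,e^{u(x_i^m+z)}$; (b) the self-reset term $\phi(x_i^m)\bigl[e^{u(r_i+z)}-e^{u(x_i^m+z)}\bigr]$, produced when $(m,i)$ spikes and its potential is reset to $r_i$; (c) for each $j\in K^i\subset\{1,\dots,i-1,i+1\}$ and each replica $n\neq m$, an afferent term for a spike of $(n,j)$ that routes the weight $w_{ji}$ onto the copy $(m,i)$: the identity $|V_{n,j}|=(M-1)^{K_j}$ and the structure of $V_{n,j}$ show that the weight reaches $(m,i)$ for a fraction $1/(M-1)$ of the routings $v\in V_{n,j}$ and collapse the inner sum over $v$, giving $\frac{1}{M-1}\phi(x_j^n)\bigl(e^{uw_{ji}}-1\bigr)e^{u(x_i^m+z)}$. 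Neurons outside $K^i\cup\{i\}$ leave $x_i^m$ untouched and drop out; the positive-part cutoff $\max\{\cdot,0\}$ in $\Delta_i$, present because the inhibitory weights are negative, has to be argued immaterial for this identity.

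\emph{Stationary identity and RMF limit.} Taking expectations under $\E^{\{1,\dots,i+1\}}$, writing $\frac{d}{du}\Lambda^{m,i}(u)=\E[(x_i^m+z)e^{u(x_i^m+z)}]$, and using the stationarity identity $\E[\phi(x_i^m)]=\E[N_i((0,1])]=\E[x_i^m]+z=\beta_i$ (and likewise $\beta_j$ for each $j$, the feedforward structure making the stationary marginal of $x_j$, $j\le i+1$, unambiguous as in the Remark), one obtains an \emph{exact} finite-$M$ relation among $\Lambda^{m,i}$, its derivative, and the cross-replica correlations $\frac{1}{M-1}\sum_{n\neq m}\E[\phi(x_j^n)e^{u(x_i^m+z)}]$; no combinatorial remainder appears precisely because $K=\{i\}$ is a singleton, which is the simplification over \cite{B-T} mentioned above. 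Letting $M\to\infty$ and invoking the Poisson Hypothesis, the copies in distinct replicas become asymptotically independent, so $\frac{1}{M-1}\sum_{n\neq m}\E[\phi(x_j^n)e^{u(x_i^m+z)}]\to\E[\phi(x_j)]\,\E[e^{u(x_i+z)}]=\beta_j\,\Lambda^i(u)$, while $\Lambda^{m,i}$ and $(\Lambda^{m,i})'$ converge to $\Lambda^i$ and $(\Lambda^i)'$. Assembling the surviving terms yields the stated ODE.

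The step I expect to be the real obstacle is this last one: establishing the Poisson Hypothesis (asymptotic decoupling of the replicas) and the convergence of the moment generating functions together with their derivatives, uniformly enough in $M$ to pass the differential identity to the limit — this is where the exponential-moment control furnished by the Foster–Lyapunov inequality of the Proposition is indispensable. A subsidiary technical point is discarding the positive-part truncation in $\Delta_i$ forced by the inhibitory weights.
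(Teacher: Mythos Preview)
Your proposal is correct and follows essentially the same route as the paper: apply $\Ge^M$ to the single-coordinate exponential $e^{u(x_i^m+z)}$, set the stationary expectation to zero, and pass to the $M\to\infty$ limit via the Poisson Hypothesis. Your bookkeeping of the routing combinatorics (the $1/(M-1)$ factor from the fraction of $v\in V_{n,j}$ with $v_i=m$) is in fact cleaner than the paper's, which writes $1/|V_{n,j}|$ while silently dropping the inner sum over $v$; and the two obstacles you flag --- justifying the asymptotic decoupling and handling the $\max\{\cdot,0\}$ truncation --- are not addressed in the paper either, which simply invokes the Poisson Hypothesis as a standing assumption.
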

\begin{proof} 
For $m\in \{1,...,M\}$ and $i\in  \N$, for the function $  V^{\{i\},\{m\}}_{u }(x)=\exp^{u(x_i^m+z)}$, we have
\begin{align*} 
\Ge^{M} [V^{\{i\},\{m\}}_{u }](x)=&- a_ig(x_i^m)uV^{\{i\},\{m\}}_{u}(x) +\ \phi (x_i^m)\left[ e^{ur_i-u x_i^m } - 1 \right]V^{\{i\},\{m\}}_{u}(x) 
+\\  &+\sum_{j\neq i} \sum_{n\neq m} \frac{1}{\vert V_{n,j} \vert} \phi( x_j^n)\left[ e^{ uw_{ji}} - 1 \right]V^{\{i\},\{m\}}_{u}(x). 
\end{align*}
Since $\E^{\{1,..,i+1\}}[\Ge^{M} [V^{\{i\},\{m\}}_u]]=0,$ we obtain
\begin{align*}
0=&- (1+a_iu)\frac{d}{du}\Lambda^{m,i}(u)(x) +   \frac{d}{du}\Lambda^{m,i}(u)(x) \I_{u=0}e^{ur_i}
+\\  &+\sum_{j\neq i} \sum_{n\neq m} \frac{1}{\vert V_{n,j} \vert} \left[ e^{ uw_{ji}} - 1 \right] \E^{\{1,..,i+1\}}[(x^{n}_{j}+z)V^{\{i\},\{m\}}_u(x)]. 
\end{align*}
Since  $\vert V_{n,j} \vert$ depends only on $j$, the last term becomes  
\begin{align*}& \sum_{j\neq i} \sum_{n\neq m} \frac{1}{\vert V_{n,j} \vert} \left[ e^{ uw_{ji}} - 1 \right] \E^{\{1,..,i+1\}}[(x_j^n+z)V^{\{i\},\{m\}}_u(x)] = \\  = &\sum_{j\neq i}  \left( e^{uw_{ji}} - 1 \right)\frac{1}{\vert V_{n,j} \vert} \sum_{n\neq m}  \E^{\{1,..,i+1\}}[(x_j^n+z)V^{\{i\},\{m\}}_u(x)].
\end{align*}
We finally get 
\begin{align*}
0=&- (1+a_iu)\frac{d}{du}\Lambda^{m,i}(u)(x) +   \frac{d}{du}\Lambda^{m,i}(u)(x) \I_{u=0}e^{ur_i}
+\\  &+\sum_{j\neq i}  \left( e^{uw_{ij}} - 1 \right)\frac{1}{\vert V_{n,j} \vert} \sum_{n\neq m}  \E^{\{1,..,i+1\}}[(x_j^n+z)V^{\{i\},\{m\}}_u(x)] ,
\end{align*}
or equivalently
\begin{align*}
0=&- (1+a_iu)\frac{d}{du}\Lambda^{m,i}(u)(x) +   \E^{\{1,..,i+1\}}[x_i^m+z]e^{ur_i}+\\  &+\sum_{j\neq i}  \left( e^{uw_{ji}} - 1 \right)\frac{1}{\vert V_{n,j} \vert} \sum_{n\neq m}  \E^{\{1,..,i+1\}}[(x_j^n+z)V^{\{i\},\{m\}}_u(x)]. 
\end{align*}
By Poisson Hypothesis, for   $m\neq n$, 
\begin{align*}
\lim_{M\rightarrow \infty}\E^{\{1,..,i+1\}}[(x_j^n+z)e^{u(x_i^m+z)}]&=\lim_{M\rightarrow \infty}\E^{\{1,..,i+1\}}[x_j^n+z]\E^{\{1,..,i+1\}}[e^{u(x_i^m+z)}]= \\ &=\beta_j \Lambda^{i}.
 \end{align*}
We finally obtain 
\begin{align*}
0=&- (1+a_iu) \frac{d}{du}\Lambda^{i}(u)(x) +\sum_{j\neq i}  \left( e^{uw_{ji}} - 1 \right)  \beta_j \Lambda^{i}+\beta_i e^{ur_i} . 
\end{align*}
\end{proof}

 The main result of this  section about the linear G-L model follows. It should be noted that the main condition (\ref{ConNoNeg}) does not allow purely inhibitory models, that is, with all weights
$w_{ji}\leq 0$. But the inequality still allows some weights to be negative as  long as the terms of the sum with positive weights compensate.\begin{theorem}\label{theorem3} Assume the linear G-L model satisfying  (H1) or (H2). If  the RMF limit   $\beta_i=\E^{\{1,..,i+1\}}[\phi(x_{i})]$  satisfies  
 \begin{align}\label{ConNoNeg}\sum_{j\in K^{i} }  \left( 1-e^{-\frac{w_{ji}}{a_i}}  \right)  \beta_j>0,\end{align}
for any $i\in \N$, then $\beta_i$ solves the following system of equations
\begin{equation}\label{res0T3}\frac{1}{\beta_i}=\int_{-\infty}^0\exp \left( l_i(u)-\sum_{j< i}\beta_j h_{ij}(u)
 \right)du\end{equation}
 where the function $l_i(u)$ and $h_{ij}(u)$ are 
 \begin{equation}\label{res1T3}l_i(u)= \frac{r_i}{a_i}(e^{a_iu}-1)\end{equation}
 \begin{equation}\label{res2T3}h_{ij}(u)= e^{-\frac{w_{ji}}{a_i}} \left( Ei(\frac{w_{ji}}{a_i}e^{ua_i})-Ei(\frac{w_{ji}}{a_i})\right)-u \end{equation}
 and where  $Ei$ denotes the exponential integral function. \end{theorem}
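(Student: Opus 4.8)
The plan is to integrate the linear first–order ODE for $\Lambda^{i}$ furnished by Proposition~\ref{rmfprop} and to read the system \reff{res0T3} off from the normalisation of this moment generating function. First I would rewrite the conclusion of Proposition~\ref{rmfprop} as
\[
(1+a_iu)\,\frac{d}{du}\Lambda^{i}(u)=\Big(\sum_{j\in K^{i}}(e^{uw_{ji}}-1)\,\beta_j\Big)\Lambda^{i}(u)+\beta_i e^{ur_i},
\]
the sum running over the finitely many $j$ with $w_{ji}\neq 0$. This scalar equation must be solved subject to the two boundary conditions carried by any moment generating function of the nonnegative variable $x_i+z$: namely $\Lambda^{i}(0)=1$, and $\lim_{u\to-\infty}\Lambda^{i}(u)=\P(x_i+z=0)$. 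Under (H2) one has $x_i+z\ge z>0$, while under (H1) the strictly positive reset $r_i$ and the deterministic flow $x\mapsto xe^{-a_it}$ keep $x_i>0$ almost surely; in both cases $x_i+z>0$ a.s., so the second condition reads $\Lambda^{i}(-\infty)=0$. That $\Lambda^{i}$ is smooth in a neighbourhood of $0$, so that the ODE is meaningful there, follows from the exponential–moment bound contained in the Foster--Lyapunov drift inequality established above.

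Next I would solve the equation. To clear the coefficient $1+a_iu$ and, crucially, to push its zero at $u=-1/a_i$ off to $-\infty$, I would perform the change of variable $1+a_iu=e^{a_i\theta}$, which maps $u\in(-1/a_i,0]$ bijectively onto $\theta\in(-\infty,0]$ and brings the equation to the form $\partial_\theta\tilde\Lambda^{i}(\theta)=A_i(\theta)\,\tilde\Lambda^{i}(\theta)+\beta_i e^{l_i(\theta)}$, with $l_i$ as in \reff{res1T3} and $A_i(\theta)=\sum_{j}\big(e^{w_{ji}(e^{a_i\theta}-1)/a_i}-1\big)\beta_j$. Solving by the integrating–factor (variation of parameters) method, the primitive $\int e^{w_{ji}(e^{a_i\theta}-1)/a_i}\,d\theta$ equals, after the substitution $\psi=e^{a_i\theta}$, a constant multiple of $Ei\big(\tfrac{w_{ji}}{a_i}e^{a_i\theta}\big)$, so that a primitive of $A_i$ is $\sum_{j}\beta_j h_{ij}(\theta)$ with $h_{ij}$ as in \reff{res2T3} (the integration constants normalised so that $h_{ij}(0)=0=l_i(0)$). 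Since the homogeneous solution does not vanish at $-\infty$, the condition $\tilde\Lambda^{i}(-\infty)=0$ selects the particular solution
\[
\tilde\Lambda^{i}(\theta)=\beta_i\int_{-\infty}^{\theta}\exp\Big(l_i(\theta)-l_i(s)-\sum_{j}\beta_j\big(h_{ij}(\theta)-h_{ij}(s)\big)\Big)\,ds ;
\]
relabelling $\theta$ as $u$ and imposing the remaining condition $\tilde\Lambda^{i}(0)=1$ then gives exactly \reff{res0T3}.

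It remains to identify \reff{ConNoNeg} as precisely the condition making this legitimate, i.e. guaranteeing convergence of the integral in \reff{res0T3}. From the expansion $Ei(\varepsilon)=\gamma+\log|\varepsilon|+O(\varepsilon)$ as $\varepsilon\to 0$ one finds that as $u\to-\infty$ one has $h_{ij}(u)=(e^{-w_{ji}/a_i}-1)\,u+O(1)$ and $l_i(u)=O(1)$, so the integrand in \reff{res0T3} is of order $\exp\big(u\sum_{j}(1-e^{-w_{ji}/a_i})\beta_j+O(1)\big)$; the integral therefore converges if and only if $\sum_{j\in K^{i}}(1-e^{-w_{ji}/a_i})\beta_j>0$, which is \reff{ConNoNeg}. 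Under this hypothesis the formula above is a genuine finite moment generating function with the prescribed values at $0$ and at $-\infty$, so the stationary quantities $\beta_i=\E^{\{1,\dots,i+1\}}[\phi(x_i)]$ — which exist thanks to the ergodicity established above, the same ergodicity underlying the Poisson–Hypothesis factorisation used in Proposition~\ref{rmfprop} — indeed solve \reff{res0T3}.

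The step I expect to be the real obstacle is the analysis of the $u\to-\infty$ regime, where one must reconcile simultaneously the boundary value $\Lambda^{i}(-\infty)=0$, the pole of the coefficient at $u=-1/a_i$, and the matching of the decay exponent of the explicit integrand with the sign condition \reff{ConNoNeg}. The change of variable $1+a_iu=e^{a_i\theta}$ is exactly the device that handles all three at once; once the equation is in that form, the integrating–factor computation and the identification of the exponential–integral primitives are routine.
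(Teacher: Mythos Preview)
Your route is the one the paper takes: the paper simply invokes Theorem~3.6 and Proposition~5.2 of \cite{B-T} for the ODE of Proposition~\ref{rmfprop} and identifies \reff{ConNoNeg} with the hypothesis $f(-1/a_i)>0$ of that proposition, whereas you reproduce that \cite{B-T} argument explicitly via the substitution $1+a_iu=e^{a_i\theta}$ and the integrating factor. In particular your reading of \reff{ConNoNeg} as the integrability condition of the integral in \reff{res0T3} is equivalent to the paper's reading of it as $f(-\tau)>0$.

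There is, however, a real slip in your boundary-condition step. Your change of variable sends $\theta\to-\infty$ to $u\to -1/a_i$, \emph{not} to $u\to-\infty$; hence $\tilde\Lambda^{i}(-\infty)=\Lambda^{i}(-1/a_i)=\E^{\{1,\dots,i+1\}}\big[e^{-(x_i+z)/a_i}\big]\in(0,1)$, which is strictly positive, so the condition ``$\tilde\Lambda^{i}(-\infty)=0$'' that you impose is false and cannot be what selects the particular solution. The correct selection principle is continuity (equivalently, boundedness) of $\Lambda^{i}$ through the regular singular point $u=-1/a_i$, which holds automatically since $\Lambda^{i}(u)\le 1$ for $u\le 0$. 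Under \reff{ConNoNeg} the homogeneous solution $\exp\big(\sum_j\beta_j h_{ij}(\theta)\big)$ diverges as $\theta\to-\infty$ (exactly by the asymptotics you computed), so any bounded solution must carry zero homogeneous part; this is precisely the role of the hypothesis $f(-\tau)>0$ in Proposition~5.2 of \cite{B-T}. With this correction your derivation of \reff{res0T3} goes through unchanged.
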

A useful corollary follows.
\begin{corollary}\label{rmfCOR}Assume the linear G-L model of (H1) or (H2)  and that the synaptic weights  $\sup_{i,j\in \N}\vert w_{ji}\vert <\infty$. If \[\sum_{j\in K^{i}}\beta_j<\infty,\] then (\ref{res0T3})   with $l_i,h_{ij}$ as in (\ref{res1T3}) and  (\ref{res2T3}) hold.
\end{corollary}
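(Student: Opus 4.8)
The plan is to obtain Corollary~\ref{rmfCOR} from Theorem~\ref{theorem3}, the only point to be checked being that, under the two hypotheses $\sup_{i,j}|w_{ji}|<\infty$ and $\sum_{j\in K^{i}}\beta_{j}<\infty$, the positivity condition (\ref{ConNoNeg}) holds automatically for every $i\in\N$; once that is established, (\ref{res0T3})--(\ref{res2T3}) are exactly the conclusion of that theorem. First I would record that the surrounding structure is already available: since $\phi$ is linear it is of polynomial order, so the Foster--Lyapunov drift estimate proved earlier in this section applies to the finite block $K=\{1,\dots,i+1\}$ for every $u>0$, which together with the ergodicity theorem makes $\E^{\{1,\dots,i+1\}}$ a genuine stationary law satisfying $\E^{\{1,\dots,i+1\}}[e^{u(x_{i}+z)}]<\infty$ for all $u>0$. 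Consequently $\Lambda^{i}$ is finite, strictly positive and smooth on the whole of $\R$, and each $\beta_{j}=\E^{\{1,\dots,j+1\}}[\phi(x_{j})]$ lies in $(0,\infty)$. Here the hypothesis $\sum_{j\in K^{i}}\beta_{j}<\infty$ is what guarantees that the finitely many numbers $\beta_{j}$, $j\in K^{i}$, entering the ODE are finite, and $\sup_{i,j}|w_{ji}|<\infty$ is what keeps the jump factors $e^{uw_{ji}}-1$ bounded uniformly over $j\in K^{i}$.

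The heart of the argument is then a one-line manipulation of the ODE of Proposition~\ref{rmfprop}. Since the sum over $j\neq i$ there collapses to the finite sum over $K^{i}$ (all other weights are zero), the ODE reads, for $u\in\R$,
\begin{align*}
0=-(1+a_{i}u)\,\frac{d}{du}\Lambda^{i}(u)+\sum_{j\in K^{i}}\bigl(e^{uw_{ji}}-1\bigr)\beta_{j}\,\Lambda^{i}(u)+\beta_{i}\,e^{ur_{i}} .
\end{align*}
Evaluating at $u=-1/a_{i}$, the first term vanishes because $1+a_{i}u=0$ there while $\tfrac{d}{du}\Lambda^{i}$ is finite, so that
\begin{align*}
\sum_{j\in K^{i}}\bigl(e^{-w_{ji}/a_{i}}-1\bigr)\beta_{j}\,\Lambda^{i}(-1/a_{i})=-\beta_{i}\,e^{-r_{i}/a_{i}}<0 .
\end{align*}
Since $\Lambda^{i}(-1/a_{i})\in(0,\infty)$ this forces $\sum_{j\in K^{i}}\bigl(1-e^{-w_{ji}/a_{i}}\bigr)\beta_{j}>0$, i.e. exactly (\ref{ConNoNeg}); in particular the negative weights among the $w_{ji}$ are necessarily dominated by the positive ones, in agreement with the remark preceding Theorem~\ref{theorem3}. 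Feeding this into Theorem~\ref{theorem3} yields (\ref{res0T3}) with $l_{i},h_{ij}$ as in (\ref{res1T3})--(\ref{res2T3}), and the boundedness of the weights additionally ensures, via $Ei(x)=\gamma+\ln|x|+O(x)$ as $x\to0$, that $h_{ij}(u)$ is at most linear in $|u|$ as $u\to-\infty$ while $l_{i}(u)$ stays bounded, so that the integral on the right of (\ref{res0T3}) converges to a finite positive number and genuinely determines $\beta_{i}$.

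The step I expect to be the real obstacle is the one used implicitly above, namely that the ODE of Proposition~\ref{rmfprop} --- obtained through the Poisson Hypothesis initially only for $u$ in a neighbourhood of the origin --- persists as an identity between finite quantities for all real $u$, in particular up to and including the singular point $u=-1/a_{i}$. This is precisely where the two hypotheses of the corollary do genuine work: the finiteness of the $\beta_{j}$ and the uniform boundedness of the weights are needed to justify the factorisation $\E^{\{1,\dots,i+1\}}[(x_{j}^{n}+z)\,e^{u(x_{i}^{m}+z)}]\to\beta_{j}\Lambda^{i}(u)$ and the differentiation under the expectation on the whole negative axis, and to bound the error made in the generator when the truncated reset $\max\{x_{i}^{m}+w_{ji},0\}$ is replaced by $x_{i}^{m}+w_{ji}$. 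Once the ODE is known to hold at $u=-1/a_{i}$, everything else is a direct citation of Theorem~\ref{theorem3}.
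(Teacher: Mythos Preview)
The paper states Corollary~\ref{rmfCOR} without proof, presenting it as an immediate consequence of Theorem~\ref{theorem3}; nothing is said about how the boundedness of the weights and the finiteness of $\sum_{j\in K^{i}}\beta_{j}$ yield condition~(\ref{ConNoNeg}). Your argument supplies this missing step by evaluating the ODE of Proposition~\ref{rmfprop} at the regular singular point $u=-1/a_{i}$: since $\Lambda^{i}$ is the transform of a nonnegative random variable, both $\Lambda^{i}(-1/a_{i})$ and its derivative are finite, and the vanishing of $1+a_{i}u$ there forces~(\ref{ConNoNeg}). This is correct and is in fact a sharper observation than the paper makes explicit---it shows that~(\ref{ConNoNeg}) is not an independent hypothesis but a structural consequence of the stationary ODE whenever the latter is valid on the negative axis. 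You are also right to flag, in your last paragraph, that the genuine work done by the corollary's hypotheses is to justify the ODE itself up to $u=-1/a_{i}$, in particular to control the discrepancy introduced when the truncation $\max\{x_{j}+w_{ij},0\}$ in~(\ref{deltaRMF}) is replaced by $x_{j}+w_{ij}$ in the computation of $V_{u}^{\{i\},\{m\}}(\Delta_{j}(x))$; the paper's derivation of Proposition~\ref{rmfprop} silently makes this replacement, and without it the evaluation at $-1/a_{i}$ would acquire an extra term. One small caveat: your evaluation presupposes $a_{i}>0$; the driftless case is handled in the paper only through the limit described in Remark~\ref{rmfREM}.
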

\begin{remark}\label{rmfREM} If we cancel the drift, i.e. take the limit $a_i\rightarrow 0$, then (see again \cite{B-T}) we get the following explicit expressions of $l_i$ and $h_{ij}$
\[l_i(u)=r_iu  \text{  \     and   \  }h_{ij}(u)=\frac{e^{w_{ji}u}-1}{w_{ji}}-u.\]
\end{remark}
Before we present the proof of Theorem \ref{theorem3}, we present some examples of the linear G-L  networks that go beyond the PUS condition and satisfy   the hypothesis of Theorem \ref{theorem3}.
\begin{example} Some examples that satisfy the conditions of Theorem \ref{theorem3} and at the same time go beyond the PUS property. 

 \underline{Example 1:} Assume (H1). We will consider  the case where   there is not drift, i.e. $a_i=0,\forall i\in \N$. Furthermore, for simplicity, assume that every $i$ receives weights only from $j<i$. Then, as pointed on Remark \ref{rmfREM}, the functions $l_i$ and $h_{ij}$ get an explicit expression which leads to the following simple form of the equation (\ref{res0T3}) 
 \begin{equation}\label{laRMFeq}\frac{1}{\beta_i}=\exp \left(\frac{\sum_{j< i}\beta_j }{w_{ji}}\right)\int_{-\infty}^0\exp\left( (r_i+\sum_{j< i}\beta_j)u\right)\exp \left(-\sum_{j< i}  \frac{\beta_je^{w_{ji}u}}{w_{ji}}
 \right)du.\end{equation}
 We assume that for all $i,j\in \N$, $w_{ij}\geq 0$. 
 
  Since, because of Corollary  \ref{rmfCOR}, it is sufficient to assume $\sum_{i\in \N}\beta_i<\infty$, we need $\beta_i$'s to decrease on $i$ sufficiently fast. But from the construction of the model, we know that every neuron $i$ receives synaptic weights only from the neurons $j<i$. In that way, we guarantee that a neuron  $i$ affects only the neurons $j$ that follow it, $j>i$, and is not affected by the neuron after it $j>i$. In that way,  $\beta_i$ depends only on the behavior of the neurons $j<i$, while  the values of $\{\beta_j, j<i\}$ do not depend, neither  on the value of $r_i$, nor on the values of the synaptic weights $w_{ji}$. 
  
That, implies, that for fixed $\{\beta_j,j<i\}$,  we can choose inductively the values $w_{ji}$ and $r_i$ in such a way that $\beta_i\leq\frac{1}{2^{i}}$.   This can happen in the following way. As in the previous example, from Remark \ref{rmfREM} we obtain the expression (\ref{laRMFeq}). The right hand side of  (\ref{laRMFeq}) increases as $w_{ij}$ goes to zero. In fact, when $w_{ij}\rightarrow 0$ we have 
\[\frac{\sum_{j< i}\beta_j\left( 1- e^{w_{ji}u} \right) }{w_{ji}}\rightarrow -u\sum_{j< i}\beta_j,\] and so
\[\frac{1}{\beta_i}\uparrow  \int_{-\infty}^0\exp\left( r_i u\right)du=\frac{1}{r_i}\]
which implies that the value of $\beta_i$ can be chosen as small as we desire as long as $r_i$ and $w_{ij}$ are sufficiently small. Since the weights $r_i,w_{ij}$ are chosen small, we can consider the case where,  for any $i\in \N$, the synaptic weights  $w_{ij}>a>0$ for all $j>i$, for some small $a$ uniformly on $i$ and $j$. Then, $\sum_{j}w_{ji}=(i-1)a\rightarrow \infty$ as $i\rightarrow \infty$, and the PUS condition does not hold.

 \underline{Example 2:} Assume (H1). If we consider the linear G-L model  with a drift $a_i>0$, then the same reasoning as in the previous example can be applied to equation (\ref{res0T3}). In that case, the values of $\beta_j$ for $j<i$ do not depend  on $w_{ji}$ and $r_i$ as in "example 2", nor  on $a_i$. Then, the smaller the values of $w_{ij}$, $r_i$ are and the bigger the value of $a_i$ is, the smaller the value of $\beta_i$.
Then, the PUS condition will not hold, as in the previous example, as long as we choose for any $i\in \N$, the   weights  $w_{ij}>a>0$ for all $j>i$.\end{example}
 \underline{The proof of Theorem \ref{theorem3}: }
\begin{proof}
The proof of the Theorem \ref{theorem3} follows directly from Theorem 3.6 from \cite{B-T}. The authors base the proof of their result on the following proposition: 
 \begin{proposition}  (Proposition 5.2 in \cite{B-T} ) If $\tau>0$ and $f(-\tau)>0$, then the ODE
\begin{equation}\label{B-Tode}(1+\frac{u}{\tau})\ \Lambda'(u)+f(u)\Lambda(u)-g(u)=0\end{equation}
admits a unique continuous solution 
\[\Lambda(u)=\int_{-\tau}^u e^{-\int_v^u\frac{f(\omega)}{1+\omega/\tau}d\omega}\frac{g(v)}{1+v/\tau}dv.\]
\end{proposition}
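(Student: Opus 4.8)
The plan is to treat \eqref{B-Tode} as the scalar linear first order ODE $\Lambda'(u)+\frac{f(u)}{1+u/\tau}\,\Lambda(u)=\frac{g(u)}{1+u/\tau}$ on the interval $u>-\tau$ (the range $u<-\tau$ is handled identically, reversing the orientation of the integrals), with $u=-\tau$ a regular singular point because the coefficient $1+u/\tau$ of $\Lambda'$ vanishes there. Away from $-\tau$ the classical variation of constants formula produces the whole one parameter family of solutions, and the only substantive point is that the hypothesis $f(-\tau)>0$ singles out the unique value of the free constant for which the solution stays bounded---hence continuous---as $u\to-\tau$. I would therefore split the proof into an existence part (the formula stated in the proposition solves the equation on $(-\tau,\infty)$ and extends continuously through $-\tau$) and a uniqueness part (no two continuous solutions can differ).

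For existence I would first check that the improper integral defining $\Lambda$ converges at its lower endpoint $v=-\tau$. Writing $1+\omega/\tau=(\omega+\tau)/\tau$ and using continuity of $f$ together with $f(-\tau)>0$, one has $\frac{f(\omega)}{1+\omega/\tau}=\frac{\tau f(-\tau)}{\omega+\tau}+O(1)$ near $-\tau$, so for each fixed $u>-\tau$ the kernel $e^{-\int_v^u\frac{f(\omega)}{1+\omega/\tau}d\omega}$ is comparable to $(v+\tau)^{\tau f(-\tau)}$ as $v\downarrow-\tau$, whence the full integrand $e^{-\int_v^u\frac{f(\omega)}{1+\omega/\tau}d\omega}\,\frac{g(v)}{1+v/\tau}$ is comparable to $(v+\tau)^{\tau f(-\tau)-1}$; since $\tau f(-\tau)>0$ the exponent exceeds $-1$, the integral converges, and $\Lambda$ is well defined on $(-\tau,\infty)$. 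The same bound is integrable uniformly for $u$ in compact subsets of $(-\tau,\infty)$, which legitimises differentiation under the integral sign; the Leibniz rule---the endpoint $v=u$ contributes $\frac{g(u)}{1+u/\tau}$, while $\partial_u$ of the kernel brings down the factor $-\frac{f(u)}{1+u/\tau}$---then gives exactly $\Lambda'(u)+\frac{f(u)}{1+u/\tau}\Lambda(u)=\frac{g(u)}{1+u/\tau}$, i.e.\ $\Lambda$ solves \eqref{B-Tode} on $(-\tau,\infty)$. Finally, integrating the asymptotic $(v+\tau)^{\tau f(-\tau)-1}$ over $(-\tau,u)$ shows $\Lambda(u)\to g(-\tau)/f(-\tau)$ as $u\downarrow-\tau$, so $\Lambda$ extends continuously to $[-\tau,\infty)$, with the value that is forced by setting $u=-\tau$ directly in \eqref{B-Tode}.

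For uniqueness, if $\Lambda_1$ and $\Lambda_2$ are continuous solutions then $h:=\Lambda_1-\Lambda_2$ is a continuous solution of the homogeneous equation $(1+u/\tau)\,h'(u)+f(u)\,h(u)=0$, so $h(u)=h(u_0)\,\exp\!\bigl(-\int_{u_0}^u\frac{f(\omega)}{1+\omega/\tau}\,d\omega\bigr)$ for a fixed $u_0>-\tau$. By the local analysis above this is comparable to $(u+\tau)^{-\tau f(-\tau)}$ near $-\tau$, which is unbounded since $\tau f(-\tau)>0$; continuity of $h$ at $-\tau$ thus forces $h(u_0)=0$, hence $h\equiv0$. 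This proves the proposition, and Theorem \ref{theorem3} follows from it upon taking $\tau=1/a_i$, $f(u)=\sum_{j\in K^{i}}(1-e^{uw_{ji}})\beta_j$ and $g(u)=\beta_i e^{ur_i}$ (for which $f(-\tau)>0$ is precisely condition \eqref{ConNoNeg}), evaluating the solution formula at $u=0$, where $\Lambda^{i}(0)=1$, and performing the substitution $1+a_iv=e^{a_is}$, which turns the integral over $(-1/a_i,0)$ into the integral over $(-\infty,0)$ appearing in \eqref{res0T3}, with $l_i$ and $h_{ij}$ the associated primitives.

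The step I expect to be the genuine obstacle is the analysis at the regular singular point $u=-\tau$: one has to pin down that the integrating factor vanishes, respectively blows up, at the precise rate $(u+\tau)^{\pm\tau f(-\tau)}$, and that $\tau f(-\tau)>0$ is exactly the threshold that simultaneously makes the defining improper integral converge and makes every nontrivial homogeneous solution fail to be continuous at $-\tau$. Once that dichotomy is established, existence is a Leibniz rule computation and uniqueness is immediate.
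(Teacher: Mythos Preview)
Your argument is correct and is the standard integrating-factor treatment of a scalar linear ODE with a regular singular point: the local asymptotics $e^{-\int_v^u f(\omega)/(1+\omega/\tau)\,d\omega}\sim c(u)(v+\tau)^{\tau f(-\tau)}$ and the corresponding blow-up rate $(u+\tau)^{-\tau f(-\tau)}$ for homogeneous solutions are precisely what make $f(-\tau)>0$ the sharp hypothesis for both convergence of the defining integral and exclusion of nontrivial homogeneous solutions under continuity at $-\tau$.

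That said, the paper does not prove this proposition at all: it is quoted as Proposition~5.2 of \cite{B-T} and invoked as a black box inside the proof of Theorem~\ref{theorem3}. The only work the paper carries out around it is the identification $\tau=1/a_i$, $f(u)=\sum_{j\in K^i}(1-e^{uw_{ji}})\beta_j$, $g(u)=\beta_ie^{ur_i}$, the observation that the ODE from Proposition~\ref{rmfprop} then coincides with \eqref{B-Tode}, and the remark that $f(-1/a_i)>0$ is exactly condition~\eqref{ConNoNeg}. You reproduce this application in your closing paragraph, so you have in fact supplied strictly more than the paper: a complete proof of the cited lemma together with the same specialisation. (Minor note: the paper writes $g(u)=-\beta_ie^{ur_i}$, but your sign $g(u)=+\beta_ie^{ur_i}$ is the one consistent with the ODE of Proposition~\ref{rmfprop} and with the stated solution formula.)
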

One observes that for  functions $g(u)=-\beta_i e^{ur_i}$ and $f(u)=-\sum_{j\neq i}  \left( e^{uw_{ji}} - 1 \right)  \beta_j$, and  $\tau=\frac{1}{a_i} $,   the ODE (\ref{B-Tode})  has  already  been  proven  in Proposition \ref{rmfprop}. It remains to impose conditions on the linear G-L model so that the  condition  $f(-\frac{1}{a_i})>0$ is satisfied, that is 
\[\sum_{j\neq i}  \left( e^{-\frac{1}{a_i}w_{ji}} - 1 \right)  \beta_j<0.\]
 \end{proof}

\section{Non-linear intensity and non linear interaction. }\label{RMF2}

In the current section, we will consider cases that go beyond  the case of linear $\phi$ and $g$.  To do so, we will introduce a new type of slightly modified interactions
as defined below in (\ref{gen3.2}). The generator of the process,  for any test function $ f : \R_+^N \to \R $  and $x \in \R_+^N$  is   as follows
    \begin{align} \label{gen3.1}
\Ge f (x ) =&-\sum_{i}a_ig(x_i)\frac{d}{dx_i}f +\sum_{i }   \phi (x_i) \left[ f ( \Delta_i ( x)  ) - f(x) \right],  
\end{align}
for $g(x_i)\geq 0$ for all $x_i\geq 0$, and $a_i\geq 0$ for all $i$
where
\begin{equation}\label{gen3.2}
(\Delta_i (x))_j =    \left\{
\begin{array}{ll}
\left((x_j )^{r}+(w_{i j})^r \right)^\frac{1}{r} & j \neq i \\
r_{i}  & j = i 
\end{array}
\right\}.\end{equation}
 
As in the case of the linear G-L model, we consider two   different sets of hypothesis, one with an    intensity function that is always strictly positive and one where it is not, in which case a reset value of non zero is required for the neuron that spikes.  \begin{itemize}

\item   \underline{Hypothesis (H3):} For some $r>0$  and $r_i>0$,
 \[ \phi(x_i)=(x_i)^r,  \ g(x_i)=x_i     \  \text{and} \ \min_i r_i>0. \]

\item   \underline{Hypothesis (H4):} For some $r>0$ and $z\geq 0$
 \[ \phi(x_i)=(x_i)^{r}+z,  \ g(x_i)=x_i  . \]
 \end{itemize}
 
 The main result of this section about the non linear  model with non linear interactions follows:
\begin{theorem}\label{theorem4} Assume the non linear model (\ref{gen3.1})-(\ref{gen3.2})  with intensity function and drift as in (H3) or (H4). If  the RMF limit   $\beta_i=\E^{\{1,..,i+1\}}[\phi(x_i)]=\E^{\{1,..,i+1\}}[(x_i)^{r}]+z$, for $r>0$ and $z\geq 0$,  satisfies  
 \[\sum_{j\in K^{i} }  \left( 1-e^{-\frac{W^{r}_{ji}}{a_i}}  \right)  \beta_j>0,\]
for any $i\in \N$, then $\beta_i$ solves the following system of equations
\[\frac{1}{\beta_i}=\int_{-\infty}^0\exp \left( l_i(u)-\sum_{j< i}\beta_j h_{ij}(u)
 \right)\]
 where the functions $l_i(u)$ and $h_{ij}(u)$ are 
\[l_i(u)= \frac{r^{r}_i}{a_i}(e^{a_iu}-1)\]
and
 \[h_{ij}(u)= e^{-\frac{w_{ji}}{a_i}} \left( Ei(\frac{W^{r}_{ji}}{a_i}e^{ua_i})-Ei(\frac{W^{r}_{ji}}{a_i})\right)-u \]
   where  $Ei$ denotes the exponential integral function. \end{theorem}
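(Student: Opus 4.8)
\emph{Sketch of proof.} The plan is to reduce the non-linear model \reff{gen3.1}--\reff{gen3.2} to the \emph{linear} G-L model of Section~\ref{RMF1} via the coordinatewise substitution $y_i=(x_i)^r$, and then to invoke Theorem~\ref{theorem3}. Concretely, I would study the image process $Y_i(t)=(X_i(t))^r$. Since $x\mapsto x^r$ is a bijection of $\R_+$, the stationary law of $Y$ is the push-forward of the stationary law of $X$ under the coordinatewise map, ergodicity of $Y$ is equivalent to that of $X$, and the $M$-replica dynamics is conjugated coordinate by coordinate, so that the RMF construction and the Poisson Hypothesis carry over unchanged. The interaction \reff{gen3.2} has been designed precisely so that this substitution linearises the jumps: for $j\neq i$ one has $\bigl((\Delta_i x)_j\bigr)^r=(x_j)^r+(w_{ij})^r=y_j+W_{ij}^r$ and $\bigl((\Delta_i x)_i\bigr)^r=(r_i)^r$, while the intensity becomes $\phi(x_i)=(x_i)^r=y_i$ under (H3), respectively $y_i+z$ under (H4).

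It then remains to transport the drift: writing a test function as $f(x)=F\bigl((x_1)^r,(x_2)^r,\dots\bigr)$ and using the chain rule gives $-a_ig(x_i)\frac{d}{dx_i}f=-a_ix_i\,r(x_i)^{r-1}\partial_{y_i}F=-(ra_i)\,y_i\,\partial_{y_i}F$, so in the $y$-variables the drift is again linear, with coefficient $\widetilde a_i:=ra_i$ and $\widetilde g(y_i)=y_i$. Hence $Y$ is distributed as the linear G-L process of Section~\ref{RMF1} with data $\widetilde a_i=ra_i$, reset values $\widetilde r_i=(r_i)^r$, synaptic weights $\widetilde w_{ij}=W_{ij}^r=(w_{ij})^r$ and linear intensity; it obeys (H1) when (H3) holds (note $\widetilde r_i>0$) and (H2) when (H4) holds, with the same shift $z$. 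The feedforward structure is preserved because $W_{ij}^r=0\iff w_{ij}=0$, so $K^{i}$ is unchanged and the Foster--Lyapunov and regeneration arguments of Section~\ref{RMF1} apply verbatim, and the RMF limit of the transformed network is $\E^{\{1,\dots,i+1\}}[(x_i)^r]+z=\beta_i$, exactly the quantity in the statement. Applying Theorem~\ref{theorem3} to $Y$ (equivalently, Proposition~\ref{rmfprop} and Proposition~5.2 of \cite{B-T}) and substituting $\widetilde w_{ji}=W^r_{ji}$, $\widetilde r_i=(r_i)^r$, $\widetilde a_i=ra_i$ into \reff{ConNoNeg} and \reff{res0T3}--\reff{res2T3} then produces the stated balance condition on $\beta_j$ and the stated system of integral equations for $\beta_i$, with $l_i$ and $h_{ij}$ as displayed (reading the $a_i$ occurring there as the drift coefficient of the linearised process).

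The main point to be careful about is not any new estimate --- once the substitution is installed, everything is inherited from Section~\ref{RMF1} --- but the verification that the change of variables genuinely commutes with the full RMF apparatus: that $\Ge^{M}$ of the non-linear network is conjugate to $\Ge^{M}$ of the linearised one (immediate from the display above, since the coordinatewise map affects neither the inter-replica routing nor the cardinalities $|V_{m,i}|$), and that the limiting factorisation $\E[(y_j^n+z)e^{u(y_i^m+z)}]\to\beta_j\Lambda^{i}$ for $Y$ is exactly the one established for a linear network. A secondary, bookkeeping issue is the well-definedness of \reff{gen3.2}: for $\bigl((x_j)^r+(w_{ij})^r\bigr)^{1/r}$ to make sense for real $r>0$ one should take the weights nonnegative or read \reff{gen3.2} with a $\max\{\cdot,0\}$ as in \reff{deltaRMF}, in which case the linearised jump is $y_j\mapsto\max\{y_j+W_{ij}^r,0\}$, matching \reff{deltaRMF} and leaving the argument intact; one must also keep track of the rescaling $a_i\mapsto ra_i$ of the drift coefficient when reading off $l_i$ and $h_{ij}$.
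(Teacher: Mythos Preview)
Your reduction is correct and rests on the same underlying idea as the paper's proof: the paper does not change variables explicitly but instead reproves Proposition~\ref{rmfprop} for the non-linear model by applying the replica generator to the test function $V^{\{i\},\{m\}}_{u,r}(x)=\exp\bigl(u((x_i^m)^r+z)\bigr)$, reading off the same first-order ODE in $u$, and then invoking Proposition~5.2 of \cite{B-T} exactly as in Theorem~\ref{theorem3}. Your explicit change of variables $y_i=(x_i)^r$ is the conceptual version of that computation; it has the advantage of showing at once why ergodicity, the Foster--Lyapunov bound and the RMF factorisation are inherited wholesale from Section~\ref{RMF1}, with nothing new to estimate. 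It also brings to the surface the drift rescaling $\widetilde a_i=ra_i$ produced by the chain rule: the paper's direct computation writes the drift contribution as $-a_ig(x_i^m)uV$, whereas differentiating $V_{u,r}$ gives $-a_i x_i^m\cdot ur(x_i^m)^{r-1}V=-ra_i\,u\,(x_i^m)^rV$, so your closing caveat about tracking $a_i\mapsto ra_i$ when reading off $l_i$, $h_{ij}$ and the positivity condition is well taken and in fact sharpens the displayed formulas.
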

 The proof of the theorem is identical to the proof of Theorem \ref{theorem3}. We only need to check the analogue of Proposition \ref{rmfprop} for the non linear interactions and intensity function. 
In analogy to the previous subsection, we now define, for some $z\geq 0$ and $r>0$,  
 \[\Lambda^{m,i}(u)=\E^{\{1,..,i+1\}}[e^{u((x_i^m)^{r}+z)}],\]
  where $\E^{\{1,..,i\}}$ the expectation with respect to the stationary measure. Similarly, the RMF limit  of $\Lambda^{m,i}$ is
\[ \Lambda^{ i}:=\lim_{m\rightarrow \infty} \Lambda^{m,i}.\]
\begin{proposition} Assume $\phi(x)=x^{r}+z$ and $g(x)=x $. For  every $m\in \{1,...,M\}$ and $i\in \{1,...,K\}$, $\Lambda^{i}$ satisfies the following ode
\begin{align*}
0=&- (1+a_iu) \frac{d}{du}\Lambda^{i}(u)(x) +\sum_{j\neq i}  \left( e^{uw_{ji}} - 1 \right)  \beta_j \Lambda^{i}+\beta_i . 
\end{align*}
where $\beta_i=\E^{\{1,..,i+1\}}[(x_i)^{r}]+z$,
\end{proposition}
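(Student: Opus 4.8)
The plan is to re-run the computation of Proposition~\ref{rmfprop} with the single change that the linear exponential $\exp(u(x_i^m+z))$ is replaced by $V^{\{i\},\{m\}}_u(x)=\exp\big(u((x_i^m)^r+z)\big)$. The cleanest way to anticipate that the resulting ODE keeps the same shape is to pass to the variable $y_i=(x_i)^r$: since $g(x)=x$, the deterministic flow $\dot x_i=-a_ix_i$ becomes $\dot y_i=-ra_iy_i$; the modified interaction \reff{gen3.2} becomes the \emph{additive} jump $y_i\mapsto y_i+(w_{ji})^r$; the self-reset becomes $y_i\mapsto(r_i)^r$; and the spiking rate is $\phi=y_i+z$. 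Hence in the $y$-coordinates the $M$-replica dynamics is precisely the linear G-L model of Section~\ref{RMF1}, so Proposition~\ref{rmfprop} applies (with $a_i,\,w_{ji},\,r_i$ replaced by $ra_i,\,(w_{ji})^r=W^r_{ji},\,(r_i)^r$), and the decorrelation furnished by the Poisson Hypothesis carries over because independence of the $x$-coordinates is equivalent to independence of the $y$-coordinates.

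If one prefers a direct computation --- which is also what one would write to stay parallel to Section~\ref{RMF1} --- I would proceed exactly as in Proposition~\ref{rmfprop}. First apply $\Ge^M$ to $V^{\{i\},\{m\}}_u$, producing the three usual blocks: the drift term $-a_ig(x_i^m)\tfrac{d}{dx_i^m}V^{\{i\},\{m\}}_u$; the self-reset jump, which by \reff{gen3.2} sends $(x_i^m)^r$ to $(r_i)^r$ and therefore equals $\phi(x_i^m)\big[e^{u((r_i)^r-(x_i^m)^r)}-1\big]V^{\{i\},\{m\}}_u$; and the jumps from the neurons $(n,j)$ with $j\neq i$, $n\neq m$, which by \reff{gen3.2} send $(x_i^m)^r$ to $(x_i^m)^r+(w_{ji})^r$ and so multiply $V^{\{i\},\{m\}}_u$ by $e^{u(w_{ji})^r}$, giving the block $\sum_{j\neq i}\sum_{n\neq m}\tfrac{1}{|V_{n,j}|}\phi(x_j^n)\big[e^{u(w_{ji})^r}-1\big]V^{\{i\},\{m\}}_u$. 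Next impose stationarity, $\E^{\{1,\dots,i+1\}}[\Ge^MV^{\{i\},\{m\}}_u]=0$, which is legitimate since the feedforward hypothesis $K^{i}\subset\{1,\dots,i-1,i+1\}$ and the ergodicity of the $(i+1)$-neuron system established above confine every expectation to the finite sub-network $\{1,\dots,i+1\}$; identify $\tfrac{d}{du}\Lambda^{m,i}(u)=\E^{\{1,\dots,i+1\}}[((x_i^m)^r+z)V^{\{i\},\{m\}}_u]$ and rearrange the drift and self-reset blocks into $-(1+a_iu)\tfrac{d}{du}\Lambda^{m,i}(u)$ plus the reset constant $\beta_i$; and finally pull the $j$-dependent replica-counting factor out of the inner sum over $n$ and invoke the Poisson Hypothesis, $\E^{\{1,\dots,i+1\}}[((x_j^n)^r+z)V^{\{i\},\{m\}}_u]\to\beta_j\Lambda^{i}$ for $n\neq m$ as $M\to\infty$, so that the double sum collapses to $\sum_{j\neq i}(e^{u(w_{ji})^r}-1)\beta_j\Lambda^{i}$ and the stated ODE follows.

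The step I expect to need slightly more care than in Proposition~\ref{rmfprop} is the drift term in the direct argument: differentiating $V^{\{i\},\{m\}}_u$ in $x_i^m$ produces the extra polynomial factor $r(x_i^m)^{r-1}$, so $-a_ig(x_i^m)\tfrac{d}{dx_i^m}V^{\{i\},\{m\}}_u=-a_iur(x_i^m)^rV^{\{i\},\{m\}}_u$, and this must be rewritten through $\tfrac{d}{du}\Lambda^{m,i}$ and $\Lambda^{m,i}$ while carrying along the additive constant $z$ in both the drift and the reset contributions --- exactly the bookkeeping that the substitution $y_i=(x_i)^r$ disposes of at once. Everything else, in particular the handling of the replica-counting factors and the passage to the limit $M\to\infty$, is word-for-word the computation of Proposition~\ref{rmfprop}.
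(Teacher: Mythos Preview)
Your direct computation is the paper's own argument: apply $\Ge^M$ to $\exp\!\big(u((x_i^m)^r+z)\big)$, split into drift, self-reset and inter-replica blocks, impose stationarity, pull the replica-counting factor out of the sum over $n$, and decorrelate via the Poisson Hypothesis --- step for step the same. The change-of-variables route $y_i=(x_i)^r$, by contrast, does not appear in the paper. It is the more economical of the two: the modified interaction \reff{gen3.2} is tailored precisely so that in the $y$-coordinates the dynamics becomes the linear G-L model of Section~\ref{RMF1} with parameters $(ra_i,(w_{ji})^r,(r_i)^r)$, and Proposition~\ref{rmfprop} then applies verbatim with no new calculation. The substitution also makes explicit the point you flag in your last paragraph --- the drift block acquires an extra factor $r$, yielding $(1+ra_iu)$ in place of the stated $(1+a_iu)$ --- whereas the paper's direct computation writes the drift term exactly as in the linear case without comment. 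What the paper's route buys is visible parallelism with Section~\ref{RMF1}; what yours buys is that the parallelism becomes a one-line reduction rather than a repeated computation, and the bookkeeping you anticipate having to do by hand is absorbed into the change of variables.
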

\begin{proof} 
For $m\in \{1,...,M\}$ and $i\in \{1,...,K\}$, consider the function $  V^{\{i\},\{m\}}_{u,r}(x)=\exp^{u \left((x_i^m)^{r}+z\right)}$. We have
\begin{align*} 
\Ge^{M} [V^{\{i\},\{m\}}_{u,r}](x)=&- a_ig(x_i^m)uV^{\{i\},\{m\}}_{u,r}(x) +\ \phi (x_i^m)\left[ e^{-u( x_i^m)^r } - 1 \right]V^{\{i\},\{m\}}_{u,r}(x) 
+\\  &+\sum_{j\neq i} \sum_{n\neq m} \frac{1}{\vert V_{n,j} \vert} \phi( x_j^n)\left[ V^{\{i\},\{m\}}_{u,r}(\Delta_i(x)) - V^{\{i\},\{m\}}_{u,r}(x) \right]. 
\end{align*}
But under the new interaction regime introduced in (\ref{gen3.2}) we now have 
\[ V^{\{i\},\{m\}}_{u,r}(\Delta_i(x)) = e^{u\left( (x_i^m)^{r}+ z+(w_{ji})^r\right)} \]
and so
\begin{align*} 
\Ge^{M} [V^{\{i\},\{m\}}_{u,r}](x)=&- a_ig(x_i^m)uV^{\{i\},\{m\}}_{u,r}(x) +\ \phi (x_i^m)\left[ e^{-u( x_i^m)^r } - 1 \right]V^{\{i\},\{m\}}_{u,r}(x) 
+\\  &+\sum_{j\neq i} \sum_{n\neq m} \frac{1}{\vert V_{n,j} \vert} \phi( x_j^n)\left[  e^{u  w_{ji}^r} -1 \right] V^{\{i\},\{m\}}_{u,r}(x). 
\end{align*}
Since $\E^{\{1,..,i+1\}}[\Ge^{M} [V^{\{i\},\{m\}}_{u,r}]]=0$ we obtain
\begin{align*}
0=&- (1+a_iu)\frac{d}{du}\E^{\{1,..,i+1\}}[ V^{\{i\},\{m\}}_{u,r}(x)] +    \frac{d}{du}\E^{\{1,..,i+1\}}[V^{\{i\},\{m\}}_{u,r}(x)] \I_{u=0}e^{ur^{r}_i}
+\\  &+\sum_{j\neq i} \sum_{n\neq m} \frac{1}{\vert V_{n,j} \vert} \left[ e^{ uw^r_{ji}} - 1 \right] \E^{\{1,..,i+1\}}[\left((x_j^n)^r+z\right)V^{\{i\},\{m\}}_{u,r}(x)]. 
\end{align*}
Since  $\vert V_{n,j} \vert$ depends only on $j$, the last term becomes  
\begin{align*}& \sum_{j\neq i} \sum_{n\neq m} \frac{1}{\vert V_{n,j} \vert} \left[ e^{ uw^r_{ji}} - 1 \right] \E^{\{1,..,i+1\}}[(x_j^n)^rV^{\{i\},\{m\}}_{u,r}(x)] = \\  = &\sum_{j\neq i}  \left( e^{uw^r_{ji}} - 1 \right)\frac{1}{\vert V_{n,j} \vert} \sum_{n\neq m}  \E^{\{1,..,i+1\}}[\left((x_j^n)^r+z\right)V^{\{i\},\{m\}}_{u,r}(x)]
.\end{align*}
We finally get 
\begin{align*}
0=&- (1+a_iu)\frac{d}{du}\E^{\{1,..,i+1\}}[ V^{\{i\},\{m\}}_{u,r}(x)]+   \frac{d}{dx_i^m}\E^{\{1,..,i+1\}}[ V^{\{i\},\{m\}}_{u,r}(x)] \I_{u=0}e^{ur^{r}_i}
+\\  &+\sum_{j\neq i}  \left( e^{uw_{ij}} - 1 \right)\frac{1}{\vert V_{n,j} \vert} \sum_{n\neq m}  \E^{\{1,..,i+1\}}[\left((x_j^n)^r+z\right)V^{\{i\},\{m\}}_u(x)] ,\end{align*}
or equivalently
\begin{align*}
0=&- (1+a_iu)\frac{d}{du}\Lambda^{m,i}(u)(x) +   \E^{\{1,..,i+1\}}[x_i^m]e^{ur^{r}_i}+\\  &+\sum_{j\neq i}  \left( e^{uw_{ji}} - 1 \right)\frac{1}{\vert V_{n,j} \vert} \sum_{n\neq m}  \E^{\{1,..,i+1\}}[\left((x_j^n)^r+z\right)V^{\{i\},\{m\}}_u(x)]. 
\end{align*}
By Poisson Hypothesis, when $M$ goes to infinity and $m\neq n$, 
\begin{align*}
\lim_{M\rightarrow \infty} \E^{\{1,..,i+1\}}[(x_j^n)^re^{u(x_i^m)^r}]=&\lim_{M\rightarrow \infty} \E^{\{1,..,i+1\}}[(x_j^n)^r]\E^{\{1,...,i+1\}}[e^{u(x_i^m)^r}]=\\ =&\E^{\{1,..,i+1\}}[(x_j)^r]\E^{\{1,...,i+1\}}[e^{u(x_i)^r}].
 \end{align*}
This leads to
\begin{align*}
0=&- (1+a_iu) \frac{d}{du}\E^{\{1,..,i+1\}}[e^{u(x_i)^r}] +\\  +\sum_{j\neq i} & \left( e^{uw^r_{ji}} - 1 \right)  \E^{\{1,..,i+1\}}[(x_j)^r+z]\E^{\{1,..,i+1\}}[e^{u(x_i)^r}]+\E^{\{1,..,i+1\}}[(x_i)^r+z] e^{ur^{r}_i}. 
\end{align*}
\end{proof}
 As in the previous section where the linear G-L model was studied, similar examples can be constructed.


\begin{thebibliography}{99}


\bibitem{A-A-C-L-F-I-Y04} G. Aaron, D. Aronov, R. Cossart,  I. Lampl, D. Ferster, Y. Ikegaya and R.  Yuste, \textit{Synfire chains and cortical songs: Temporal modules of cortical activity}. Science, 304(5670), 559-564 (2004).


 \bibitem{A-E-V94} 
 L.F. Abbott, G.B. Ermentrout, C. Van Vreeswijk, \textit{When inhibition not excitation synchronizes neural firing.} J Comput Neurosci  1, 313-21 (1994).

\bibitem{Abe91} M. Abeles,   \textit{Corticonics: Neural Circuits of the Cerebral Cortex.}  Cambridge University Press (1991). 

\bibitem{Am-Br97} D.J. Amit, N. Brunel,  \textit{Model of global spontaneous activity and local structured activity during delay periods in the cerebral cortex.} Cereb Cortex.  7, 237-252 (1997).


\bibitem{Am-Br} D.J. Amit, N. Brunel,  \textit{Dynamics of a recurrent network of spiking neurons before and following learning}, Network: Computation in Neural Systems, 8, 373-404 (1997).

\bibitem{A1} M. Andr\'e, \textit{ Convergence of the temporal averages of a metastable system of spiking neurons}Stoch Process Their Appl 157, 42-68 (2023).
  
\bibitem{A} M. Andr\'e, \textit{A result of metastability for an infinite system of spiking neurons}. J Stat Phys, 177, 984-1008 (2019).


\bibitem{A-N-R} M. Andr\'e, F.A. Najmam, C. Romaro,  \textit{A Numerical Study of the Time of Extinction in a Class of Systems of Spiking Neurons.} J Stat Phys 190, 41 (2023).

\bibitem{A-P} M. Andr\'e and L. Planche, \textit{The Effect of Graph Connecitivity on Metastability on a Stochatic System of Spiking Neurons}. Stoch Process Their Appl, 131, 292-310 (2021) .
 
\bibitem{ABGKZ} R. Aza\"is, J.B. Bardet, A. Genadot, N. Krell and P.A. Zitt,
\textit{Piecewise deterministic Markov process (pdmps). Recent results}. Proceedings 44, 276-290 (2014). 

  
\bibitem{r8} F. Baccelli and A. Rybko, S. Shlosman and A. Vladimirov, \textit{Metastability of queueing networks with mobile servers}. J. Stat. Phys.  (2018). 


\bibitem{B-T} F. Baccelli and T. Taillefumier, \textit{Replica-Mean-Field limits for intensity-based networks.}  SIAM J. Appl. Dyn. Syst., 18, 1756-1797 (2019).  

  
  \bibitem{BaTa20} F. Baccelli and T. Taillefumier, \textit{The Pair-Replica-Mean-Field Limit for Intensity-based Neural Networks.} SIAM Journal on Applied Dynamical Systems, 20, 1 165-207 (2021). 

 
\bibitem{r10} M. Benaim and J-Y. Le Boudec, \textit{A class of mean field interaction models for computer and communication systems}. Performance Evaluation, 65, 823-838 (2008).


 

\bibitem{BHJOT}  B.R. Bloem, R.C. Helmich, M.J. Janssen, W.J. Oyen, I. Toni, \textit{Pallidal dysfunction drives a cerebellothalamic circuit into Parkinson tremor.} Ann Neurol. 69 (2), 269-281 (2011). 
 

  \bibitem{Br2000}  N. Brunel,  \textit{Dynamics of Sparsely Connected Networks of Excitatory and Inhibitory Spiking Neurons.} J Comput Neurosci 8, 183-208 (2000). 


 \bibitem{Br-Ha99}  N. Brunel, V. Hakim, \textit{Fast global oscillations in networks of integrate-and-fire neurons with low firing rates.} Neural Comput. 11, 1621-1671 (1999).


 
 \bibitem{C17} J. Chevalier,
 \textit{Mean-field limit of generalized Hawkes processes}. Stochastic Processes and their Applications 127 (12), 3870 - 3912 (2017).

 \bibitem{C-D-L-O} J.  Chevallier, A. Duarte, E. L\"ocherbach, G. Ost, \textit{ Mean field limit for non linear spatially extended Hawkes processes with exponential memory kernels}. Stoch. Proc. Appl,  129,   1-27 (2019).


\bibitem{C-M-T-T15}  M. Cocchi,  C. Minuto,   L. Tonello, J. Tuszynski,  \textit{Connection between the Linoleic Acid and Psychopathology: A Symmetry-Breaking Phenomenon in the Brain?} Open Journal of Depression, 4, 41-52  (2015).

 \bibitem{Co-Pa} C. F.  Coletti  and I. Papageorgiou, \textit{ Asymptotic Analysis of the Elephant Random Walk.} J. Stat. Mech.: Theory Exp. 1 (2021).
 
\bibitem{Co}Q. Cormier \textit{A mean-field model of Integrate-and-Fire neurons: non-linear stability of the stationary solutions.} arXiv:2002.08649 (2023).


\bibitem{Co2} Q Cormier, F Tanr\'e, R Veltz \textit{Long time behavior of a mean-field model of interacting neurons} Stoch. Proc. Appl. 130 (5), 2553-2595 (2020). 

\bibitem{CostaRibSan1991}  P.F. Costa, M. A. Ribeiro and A.I. Santos 
  \textit{Afterpotential characteristics and firing patterns in maturing rat hippocampal CA1 neurones in in vitro slices. } Brain research. Developmental brain research, 62 (2),   263-72 (1991).

\bibitem{Cot92} M. Cottrell,  \textit{Mathematical analysis of a neural network with inhibitory coupling},
Stoch Process Their Appl 40, 103-126 (1992).



\bibitem{C-D-M-R}  A. Crudu, A. Debussche, A. Muller and O. Radulescu
\textit{Convergence of stochastic gene networks to hybrid piecewise deterministic processes}. The Annals of Applied Probability 22, 1822-1859  (2012).

\bibitem{Davis84} M.H.A. Davis
\textit{Piecewise-derministic Markov processes: a general class off nondiffusion stochastic models} J. Roy. Statist. Soc. Ser. B, 46(3) 353 - 388 (1984).

\bibitem{Davis93} M.H.A. Davis
\textit{Markov models and optimization} Monographs on Statistics and Applied Probability, vol. 49 Chapman $\&$ Hall,  London. (1993)
 
  


 \bibitem{D-L} S. Ditlevsen and  E. L\"ocherbach, \textit{Multi-class oscillating systems of interacting neurons}. Stoch. Proc. Appl., 127, 1840-1869 (2017).  
 
 

 \bibitem{D-L-O} A. Duarte, E. L\"ocherbach and G. Ost, \textit{ Stability, convergence to equilibrium and simulation of non-linear Hawkes Processes with memory kernels given by the sum of Erlang kernels}. ESAIM: Probability and Statistics,  23, 770-796 (2019).  
 


 
\bibitem{Fe-Ha-Ko02} M.S. Fee  R. H. Hahnloser and A.A. Kozhevnikov, \textit{ An ultra-sparse code underlies the generation of neural sequences in a songbird. Nature}, 419(6902), 65-70 (2002).


 \bibitem{F-G-L} P. A. Ferrari, A. Galves, I. Grigorescu  and E. L\"ocherbach, \textit{Phase Transition for Infinite Systems of Spiking Neurons}. J. Stat. Phys., 172, 1564-1575 (2018).

 


\bibitem{G-L} A. Galves and E. L\"ocherbach, \textit{Infinite Systems of Interacting Chains with Memory of Variable Length-A Stochastic Model for Biological Neural Nets}. J Stat Phys 151, 896-921 (2013). 
  

 





 \bibitem{H-R-R} N. Hansen, P. Reynaud-Bouret and V. Rivoirard
\textit{Lasso and probabilistic inequalities for multivariate point processes}. Bernoulli, 21(1) 83-143 (2015). 
 
\bibitem{H-K-L} P. Hodara, N. Krell and E. L\"ocherbach, \textit{Non-parametric estimation of the spiking rate in systems of interacting neurons}. E. Stat Inference Stoch Process,  1-16 (2016). 

\bibitem{H-L} P. Hodara and E. L\"ocherbach,
\textit{Hawkes processes with variable length memory and an infinite number of components}. Adv. Appl. Probab 49, 84-107 (2017).  
 
 \bibitem{H-P} P. Hodara and I. Papageorgiou,
\textit{Poincar\'e type inequalities for compact  degenerate pure jump Markov processes}. Mathematics 7(6), 518 (2019).  

 \bibitem{K-M-R} F.I. Karpelevich, V.A. Malyshev, A.N. Rybko, \textit{Stochastic Evolution of Neural Networks}, Markov Processes Relat. Fields 1, 141-161  (1995)

 
 \bibitem{Kr} V. I. Kryukov,  \textit{Wald’s Identity and Random Walk Models for Neuron Firing.} Adv. Appl. Probab. 8, 257–77 (1976). 


\bibitem{La-Mus91} P. Lánský and M. Musila, \textit{Generalized Stein's model for anatomically complex neurons.}
Biosystems,
 25 (3), 179-191 (1991).

\bibitem{Ma-La96}  G. Laurent and K. MacLeod, \textit{Distinct mechanisms for synchronization and temporal patterning of odor-encoding neural assemblies.} Science.  274, 976-979 (1996). 

\bibitem{L17} E. L\"ocherbach,
\textit{Absolute continuity of the invariant measure in piecewise deterministic Markov Processes having degenerate jumps}. Stochastic Processes and their Applications (2017).




\bibitem{Lo} E. L\"ocherbach, \textit{ Convergence to Equilibrium for Time-Inhomogeneous Jump Diffusions with State-Dependent Jump Intensity.} J Theor Probab 33, 2280–2314 (2020).

\bibitem{L-M} E. L\"ocherbach, P. Monmarch\'e, \textit{Metastability for systems of interacting neurons}  Ann. Henri Poincar\'e, Probabilit\'es et Statistiques 58, 343–378 (2022) 



\bibitem{Lu-Om-Si06}  K.L. Lubliner, A. Omurtag,  Sirovich, \textit{Dynamics of neural populations: stability and synchrony.} Network 17, 3-29 (2006).


\bibitem{Borris13} B. Marin, W.H. Barnett, A. Doloc-Mihu, R.L. Calabrese, G.S. Cymbalyuk  \textit{High Prevalence of Multistability of Rest States and Bursting in a Database of a Model Neuron.} PLoS Comput Biol 9(3): e1002930 (2013).  



\bibitem{McCPr87} DA McCormick and DA Prince.\textit{ Post-natal development of electrophysiological properties of rat cerebral cortical pyramidal neurones.} J Physiol. 393, 743-762 (1987)


\bibitem{M-T} S. Meyn and R. L. Tweedie, \textit{Stability of Markovian Processes iii:Foster-Lyapunov criteria for continuous-time processes}. Adv.\ Appl.\ Probab 25, 518-548  (1993). 
 



\bibitem{Me-Mo-Tu94}   E.C. van der Meulen, W. Mommaerts, T. S. Turova, \textit{Analysis of critical effects in a stochastic neural model.}  ESANN, 85-99 (1994).

 


   
\bibitem{PTW-10} K. Pakdaman, M. Thieulen and G. Wainrib,
\textit{Fluid limit theorems for stochastic hybrid systems with application to neuron models}. Adv.\ Appl.\ Probab 42, 761-794 (2010). 

\bibitem{Pa} I. Papageorgiou,
\textit{Interacting systems of infinite spiking neurons with weights beyond uniform summability}. Markov Processes Relat. Fields  29, 435–456 (2023). 

\bibitem{Pa2} I. Papageorgiou,
\textit{Modified log-Sobolev inequality for a compact  PJMP with degenerate jumps.} J. Stat. Phys. 178, 1293–1318 (2020).

\bibitem{P-R-R} J.R.C. Piqueira, C. Romaro, A.C. Roque,
\textit{Boundary solution based on rescaling method:
recoup the first and second-order statistics of
neuron network dynamics.}  arXiv:2002.02381 (2020)



\bibitem{PouzM18} C. Pouzat,   \textit{ Galves–Löcherbach Model Simulator.} (2018)  Retrieved from https://christophe-pouzat.github.io/Galves-Locherbach-model-simulator


\bibitem{Re-Th-Tre17} V. Renault, M. Thieullen, E. Trélat. \textit{Optimal control of infinite-dimensional piecewise deterministic Markov processes and application to the control of neuronal dynamics via Optogenetics}. Networks and Heterogeneous Media,  12(3), 417-459 (2017).


\bibitem{Se-Ts95}  T. Sejnowski, M. Tsodyks,  \textit{Associative Memory and Hippocampal Place Cells.} International Journal of Neural Systems, 6, 81-86 (1995).


\bibitem{Sh-Ne94} M. N. Shadien, and W. T. Newsome,  \textit{Noise, neural codes and cortical organization}, Curt. Opin. Neurobiol. 4, 569-579 (1994).

\bibitem{So-Vr96} H. Sompolinsky, C. van Vreeswijk,  \textit{Chaos in neuronal networks with balanced excitatory and inhibitory activity.} Science. 6,  1724-1726 (1996). 


\bibitem{Tuc}   H.C. Tuckwell,  \textit{Stochastic Processes in the Neurosciences}, CBMS
Regional Conf. Ser. in Appl. Math., 56, SIAM, Philadelphia, PA (1989)
 
\bibitem{Tu} T. S. Turova, \textit{Analysis of a biologically plausible neural network via an hourglass model}. Markov Process. Relat. Fields 2, 487–510 (1997).

\bibitem{TUROVA1997197} T. S. Turova, \textit{Stochastic dynamics of a neural network with inhibitory and excitatory connections} Neuronal Coding 1, 197-202 (1997).

 \bibitem{Je-Tr-Wh95} J.G. Jefferys, R.D. Traub,  M.A. Whittington, \textit{Synchronized oscillations in interneuron networks driven by metabotropic glutamate receptor activation.} Nature 373, 612-615 (1995).
   
\bibitem{r49} A. Rybko and S. Shlosman,
\textit{Poisson hypothesis for information networks, I}. Msc. Math. J., 5, 679-704 (2005).

 
\bibitem{r48} P. Robert and J. Touboul,
\textit{On the dynamics of random neuronal networks}. J. Stat. Phys., 165, 545-584 (2016) 42, 761-794, (2010).  

\bibitem{Stein} R.B. Stein,  \textit{A Theoretical Analysis of Neuronal Variability}. Biophysical Journal 5.2,  173–194 (1965).

\bibitem{r60} D. Vedenskaya, R. L. Dobrushin and F. I. Karpelevich,
\textit{Queueing system with selection of the shortest of two queues: An asymptotic approach}. Probl. Inf. Transm., 32, 15-27 (1996).

\end{thebibliography}
\end{document}